\let\mathbb\mathds
\newtheorem{theorem}{Theorem}[section]
\newtheorem{lemma}[theorem]{Lemma}
\newtheorem{corollary}[theorem]{Corollary}
\newtheorem{proposition}[theorem]{Proposition}
\theoremstyle{remark}
\newtheorem{rem}[theorem]{Remark}
\theoremstyle{definition}
\newtheorem{definition}[theorem]{Definition}
\def\del{\partial}              
\def\bC{\mathbb C}          
\def\bR{\mathbb R}          
\def\bQ{\mathbb Q}          
\def\bN{\mathbb N}          
\def\bZ{\mathbb Z}          
\def\bT{\mathbb T}          
\def\bP{\mathbb P}
\def\bcp{\mathbb C \mathbb P}
\def\mS{\mathcal{S}}            
\def\mS{\mathcal{S}}            
\def\mK{\mathcal{K}}
\def\bfB{\mbox{{\bf B}}}
\def\kt{\mathfrak{t}}
\def\Lm{L^{\text{min}}}
\def\b{\alpha}
\def\bi{{\bf{ i}}}
\def\ps{s}
\def\pol{P}
\def\ra{\rightarrow}
\def\vol{d\varpi}
\def\zeta{{\rm A}}
\def\del{\partial}
\def\Lap{\Delta}
\def\Hess{{\rm{ Hess\,}} }
\def\w{{m}} 
 \newcommand*{\quot}[2]%
{\ensuremath{%
   \raisebox{.35ex}{\ensuremath{#1}}\big/\raisebox{-.35ex}{\ensuremath{#2}}}}
\begin{document}

\title{Toric aspects of the first eigenvalue}

\author{Eveline Legendre}
\author{Rosa Sena-Dias}
\date{\today}

\address{Eveline Legendre\\ Universit\'e Paul Sabatier\\
Institut de Math\'ematiques de Toulouse\\ 118 route de Narbonne\\
31062 Toulouse\\ France}
\email{eveline.legendre@math.univ-toulouse.fr}

\address{Rosa Sena-Dias\\Centro de An\'alise Matem\'atica, Geometria e Sistemas Din\^amicos\\ Departamento de Matem\'{a}tica, Instituto Superior T\'{e}cnico\\ Av. Rovisco Pais, 1049-001 Lisboa\\ Portugal}
\email{rsenadias@math.ist.utl.pt}

\thanks{RSD was partially supported by FCT/Portugal through projects PEst-OE/EEI/LAOO9/2013, EXCL/MAT-GEO/0222/2012 and PTDC/MAT/117762/2010 and EL is partially supported by the ANR French grant EMARKS. We would also like to thank CAST for a travel grant that allowed EL to visit Lisbon.}

\begin{abstract} 
In this paper we study the smallest non-zero eigenvalue $\lambda_1$ of the Laplacian on toric K\"ahler manifolds. We find an explicit upper bound for $\lambda_1$ in terms of moment polytope data. We show that this bound can only be attained for $\bC\bP^n$ endowed with the Fubini-Study metric and therefore  $\bC\bP^n$ endowed with the Fubini-Study metric is spectrally determined among all toric K\"ahler metrics.  We also study the equivariant counterpart of $\lambda_1$ which we denote by $\lambda_1^T$. It is the smallest non-zero eigenvalue of the Laplacian restricted to torus-invariant functions. We prove that $\lambda_1^T$ is not bounded among toric K\"ahler metrics thus generalizing a result of Abreu-Freitas on $S^2$. In particular, $\lambda_1^T$ and $\lambda_1$ do not coincide in general.
\end{abstract}
\maketitle
\section{Introduction}
Toric K\"ahler manifolds are very symmetric K\"ahler manifolds for which there is a concrete parametrization of the space of K\"ahler metrics.  More concretely, they are symplectic manifolds admitting an {effective} Hamiltonian action from a maximal torus and endowed with a compatible, torus invariant Riemannian metric giving rise to an integrable complex structure. The underlying symplectic manifold is completely characterized by a combinatorial object which is a convex polytope called moment polytope arising as the image of the moment map for the torus action. Toric K\"ahler metrics are parametrized by convex functions on that moment polytope satisfying certain properties as we will discuss in section \ref{background}. Toric K\"ahler manifolds have played a crucial role in studying important questions in geometry. {Mabuchi was one the first to study their K\"ahler Geometry in \cite{mab}}. In \cite{don:scalar}, Donaldson was able to fully characterize those toric K\"ahler surfaces 
which admit constant scalar curvature thus settling an important conjecture in K\"ahler geometry in the toric 
context for real dimension 4. 
There has been a lot of interest in studying toric spectral geometry and, in particular, inverse spectral questions in this toric context as well (see \cite{abreufreitas}, \cite{DGS}). 

Given a Riemannian manifold $(M,g)$, the Riemannian metric determines a Beltrami-Laplace operator whose smallest non-zero eigenvalue, which we also refer to as the {\it first eigenvalue}, and is denoted by $\lambda_1(g)$, carries a surprising amount of geometric information. There has been great deal of effort put into finding sharp bounds for $\lambda_1$ with geometric meaning (see \cite{BGM}). In \cite{hersch}, Hersch discovered an upper bound for $\lambda_1$ for metrics on $S^2$. Bourguignon--Li--Yau found an upper bound for $\lambda_1$ for K\"ahler manifolds endowed with a full holomorphic embedding into projective space~\cite{BLY}. This result has been extended in some ways to K\"ahler manifolds carrying Gieseker stable bundle~\cite{AGL} by allowing maps to Hermitian symmetric spaces~\cite{BiGh}. Polterovich (see \cite{leonidP}) looked at boundedness of $\lambda_1$ in the context of symplectic manifolds. He showed, in 
particular, that there are symplectic manifolds admitting compatible Riemannian metrics whose $\lambda_1$ is arbitrarily large. One of the questions we want to address here is: ``are there geometric bounds on $\lambda_1(M,g)$ where $M$ is a toric manifold and $g$ is a toric K\"ahler metric on it?". As it turns out, one can always use Bourguignon--Li--Yau's result in the toric context, and we use it to give an 
explicit bound for $\lambda_1$ in terms of moment polytope data. More precisely,  we prove the following theorem.
\begin{theorem}\label{thm_bound_lambda_1}
Let $(M^{2n},\omega)$ be a toric symplectic manifold endowed with a toric K\"ahler structure whose Riemannian metric we denote by $g$. Let $\pol\subset \bR^n$ be its moment polytope. There is an integer, $k_0(\pol)\geq 1$ such that for any $k\geq k_0(\pol)$ 
$$
\lambda_1(g)\leq \frac{2nk(N_k+1)}{N_k},
$$
where $N_k+1=\sharp(P\cap \bZ^n/k)$. If $\pol$ is integral (i.e its vertices lie in $\bZ^n$), then we have a finer bound given by
$$
\lambda_1(g)\leq \frac{2n(N+1)}{N},
$$
where $N+1=\sharp(P\cap \bZ^n)$ is the number of integer points in $\pol$.
\end{theorem}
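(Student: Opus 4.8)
The plan is to feed into the Bourguignon--Li--Yau inequality a family of full holomorphic immersions $\phi_k\colon M\to\bcp^{N_k}$ built from the monomials of the underlying toric variety, one for each large $k$. Recall first the Bourguignon--Li--Yau bound in the form we will use. Given a full holomorphic immersion $\phi\colon M^{2n}\to\bcp^N$, let $\mu\colon M\to\{\text{Hermitian }(N{+}1)\times(N{+}1)\text{ matrices}\}$ send a point to the orthogonal projection onto the complex line it represents, so that $\operatorname{tr}\mu\equiv1$ and $|\mu|^2\equiv1$. By the Hersch/Li--Yau balancing argument one may post-compose $\phi$ with an element of $PGL(N+1,\bC)$ so that $\int_M\mu\,dV_g=\tfrac{\operatorname{Vol}(M,g)}{N+1}\,\Id$; then the $N^2+2N$ entries of $\mu-\tfrac{1}{N+1}\Id$ are $L^2$-orthogonal to the constants and are admissible test functions for $\lambda_1$. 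Using the pointwise identity $|\mu-\tfrac{1}{N+1}\Id|^2=\tfrac{N}{N+1}$ together with $\int_M|\nabla\mu|^2\,dV_g=2\int_M\phi^*\omega_{FS}\wedge\tfrac{\omega^{n-1}}{(n-1)!}$, valid for holomorphic maps with a suitable normalization of $\omega_{FS}$, the Rayleigh quotient estimate gives
\[
\lambda_1(g)\ \le\ \frac{2(N+1)}{N}\cdot\frac{\int_M\phi^*\omega_{FS}\wedge\frac{\omega^{n-1}}{(n-1)!}}{\int_M\frac{\omega^n}{n!}}.
\]

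Next I would construct the maps and estimate the right-hand side. The complex structure compatible with $g$ presents $M$ as a smooth projective toric variety with fan the normal fan $\Sigma_\pol$ of $\pol$; on the open orbit $(\bC^*)^n\subset M$ the lattice points $A_k:=k\pol\cap\bZ^n$ define the monomial map $z\mapsto[\,z^\alpha:\alpha\in A_k\,]$ to $\bcp^{N_k}$, where $N_k+1=\sharp A_k=\sharp(\pol\cap\tfrac1k\bZ^n)$. Standard toric geometry shows this extends to a morphism of $M$ (for $k$ large the normal fan of $\operatorname{conv}A_k$ coincides with $\Sigma_\pol$, hence is refined by it) which is a closed embedding once $k\ge k_0(\pol)$, $k_0(\pol)$ being the very-ampleness threshold of these linear systems; such an embedding is automatically full, as distinct monomials are linearly independent. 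Moreover $[\phi_k^*\omega_{FS}]$ is the toric class attached to the polytope $\operatorname{conv}A_k\subseteq k\pol$, and since these two polytopes share the normal fan $\Sigma_\pol$ the class $k[\omega]-[\phi_k^*\omega_{FS}]$ is represented by an effective toric divisor, so that
\[
\int_M\phi_k^*\omega_{FS}\wedge\tfrac{\omega^{n-1}}{(n-1)!}\ \le\ k\int_M\omega\wedge\tfrac{\omega^{n-1}}{(n-1)!}\ =\ k\,n\int_M\tfrac{\omega^n}{n!}.
\]
Substituting into the displayed inequality gives $\lambda_1(g)\le\frac{2nk(N_k+1)}{N_k}$. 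When $\pol$ is integral one takes $k_0(\pol)=1$: a Delzant lattice polytope is very ample, so $\phi_1$ is already a full holomorphic embedding, while $\operatorname{conv}A_1=\pol$ exactly forces $[\phi_1^*\omega_{FS}]=[\omega]$, making the last estimate an equality and yielding $\lambda_1(g)\le\frac{2n(N+1)}{N}$.

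The main obstacle is the toric--convex-geometry bookkeeping of the second paragraph: identifying $k_0(\pol)$ — base-point-freeness and very ampleness of the linear systems $|\,k\pol\cap\bZ^n\,|$, and very ampleness of Delzant lattice polytopes in the integral case — and controlling $[\phi_k^*\omega_{FS}]$ via the dilation behaviour of $k\pol$ and of its integer hull. A secondary but necessary point is fixing mutually compatible normalizations (of $\omega_{FS}$, of $dV_g$ versus $\omega^n/n!$, and of the energy density) so that the constant in Bourguignon--Li--Yau comes out exactly as stated; this normalization is also what makes the bound sharp on $\bcp^n$.
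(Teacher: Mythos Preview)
Your approach is essentially the paper's: apply Bourguignon--Li--Yau to the monomial embedding determined by $A_k = k\pol\cap\bZ^n$, then bound $[\phi_k^*\omega_{FS}]\le k[\omega]$ by reading the inclusion $\operatorname{conv}(A_k)\subseteq k\pol$ as an effective combination of the facet divisors. The only difference is packaging: where you invoke the integer hull and a very-ampleness threshold abstractly, the paper constructs the relevant polytope explicitly as $\pol_k=\{x\in\pol: L_i(x)\ge \min_{m\in\pol\cap\bZ^n/k}L_i(m)\}$, checks by hand that $k\pol_k$ is an integral Delzant polytope with $k\pol_k\cap\bZ^n=k\pol\cap\bZ^n$ (so in fact $k\pol_k=\operatorname{conv}(A_k)$), and defines $k_0(\pol)$ concretely as the least $k$ for which $\pol_k$ is combinatorially equivalent to $\pol$.
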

We will make $k_0(\pol)$ explicit ahead. The Fubini-Study metric realizes the bound in the above theorem. In fact we show that this is the {\it only} toric K\"ahler metric that does saturate this bound in the integral case. 
\begin{theorem}\label{saturate}
Let $(M^{2n},\omega)$ be an integral toric symplectic manifold endowed with a toric K\"ahler structure whose Riemannian metric we denote by $g$. Let $N+1$ be the number of integer points in the moment polytope of $M$. If
$$
\lambda_1(g)=\frac{2n(N+1)}{N},
$$
then $M$ is equivariantly symplectomorphic to $\bC\bP^n$ and this symplectomorphism takes $g$ into the Fubini-Study metric on $\bC\bP^n$.
\end{theorem}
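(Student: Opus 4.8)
The plan is to revisit the proof of Theorem~\ref{thm_bound_lambda_1} and read off what equality imposes. Let $s_0,\dots,s_N$ be the monomial sections of the ample line bundle $L$ associated with the integral polytope $\pol$ (indexed by $\pol\cap\bZ^n$, with $N+1=\sharp(\pol\cap\bZ^n)$); the vertex monomials have no common zero, so this linear system is base-point free and defines a torus-equivariant holomorphic map $F\colon M\to\bcp^N=\bP(H^0(M,L)^*)$ with $F^*\mathcal O(1)=L$, hence $[F^*\omega_{FS}]=[\omega]$ for the appropriate normalisation of $\omega_{FS}$. Compose $F$ with the standard embedding $\Psi\colon\bcp^N\hookrightarrow S(r)\subset E$ into the space $E$ of trace-free Hermitian $(N+1)\times(N+1)$ matrices with $\langle A,B\rangle=\operatorname{tr}(AB)$ and $r^2=\tfrac N{N+1}$ (rank-one projectors shifted by $-\tfrac1{N+1}\mathrm{Id}$). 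In the Bourguignon--Li--Yau argument behind Theorem~\ref{thm_bound_lambda_1} one balances $F$ by an automorphism of $\bcp^N$ and uses the components of $\Psi\circ F$ as test functions; equality $\lambda_1(g)=\tfrac{2n(N+1)}N$ therefore forces each of these components to be a $\lambda_1$-eigenfunction. Writing $\psi_{ij}=F^*\Phi_{ij}-\tfrac{\delta_{ij}}{N+1}$ with $\Phi_{ij}$ the tautological coordinates on $\bcp^N$, this means $\Delta\psi_{ij}=\Lambda\psi_{ij}$ with $\Lambda:=\tfrac{2n(N+1)}N$, and $\int_M\psi_{ij}\,\vol=0$.

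The first genuine step is to promote this to: $F$ is a holomorphic isometric immersion. Since the $\Phi_{ij}$ are the entries of a rank-one projector, $\sum_{ij}|F^*\Phi_{ij}|^2\equiv1$, so $\sum_{ij}|\psi_{ij}|^2\equiv\tfrac N{N+1}$; differentiating this identity and using $\Delta\psi_{ij}=\Lambda\psi_{ij}$ gives $\sum_{ij}|\nabla\psi_{ij}|^2\equiv\Lambda\tfrac N{N+1}=2n$. But $\sum_{ij}|\nabla\psi_{ij}|^2=|d(\Psi\circ F)|^2=\operatorname{tr}_g(F^*g_{FS})$, with $g_{FS}$ normalised so that $\Psi$ is isometric, so $\operatorname{tr}_g(F^*g_{FS})\equiv2n$ and hence $\operatorname{tr}_\omega(F^*\omega_{FS})$ is constant, necessarily $n$. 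As $F^*\omega_{FS}-\omega$ is $\del\bar\del$-exact on the compact Kähler manifold $M$, writing it as $2i\,\del\bar\del\varphi$ yields $\Delta\varphi=0$, hence $\varphi$ constant and $F^*\omega_{FS}=\omega$; since $F$ is holomorphic this forces $F^*g_{FS}=g$, so $F$ is an immersion and $F(M)\subset\bcp^N$ is a Kähler submanifold, in particular minimal.

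The main and final step is to conclude $n=N$. For the minimal immersion $F$, restriction of a function $u$ on $\bcp^N$ obeys $\Delta_g(u\circ F)=(\Delta_{FS}u)\circ F+\operatorname{tr}_{NM}\operatorname{Hess}_{FS}u$, where $\operatorname{tr}_{NM}$ is the trace over the complex normal bundle of $F(M)$. The first eigenfunctions of $(\bcp^N,g_{FS})$ span an $((N+1)^2-1)$-dimensional space with eigenvalue $\Lambda_0:=2(N+1)$, and are precisely the Killing potentials of the $SU(N+1)$-action, so $\operatorname{Hess}_{FS}u$ is $J$-invariant for such $u$; moreover each such $u$ pulls back under $F$ to a real-linear combination of the $\psi_{ij}$ and $\overline{\psi_{ij}}$, hence is a $\Lambda$-eigenfunction of $\Delta_g$. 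Writing $\operatorname{Hess}_{FS}u=-\tfrac{\Lambda_0u}{2N}g_{FS}+B_u$ with $B_u$ trace-free Hermitian, the eigenvalue identity collapses to $\operatorname{tr}_{N_pM}B_u=0$ for all $u$ and all $p\in M$. Now $u\mapsto B_u(F(p))$ is a $\mathrm{Stab}(F(p))$-equivariant map from the first eigenspace to the space of trace-free Hermitian forms on $T_{F(p)}\bcp^N$; it is nonzero because otherwise every first eigenfunction would satisfy Obata's equation, forcing $\bcp^N$ to be a round sphere, impossible for $N\ge2$; and the target is irreducible under $\mathrm{Stab}(F(p))\cong U(N)$, so the map is onto. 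Thus $\operatorname{tr}_WB=0$ for \emph{all} trace-free Hermitian $B$, where $W:=N_pM$; taking $B$ positive-definite on $W$ and suitably negative on the complement forces $\dim_\bC W=0$, i.e. $n=N$ (for $N=1$ this is automatic since $0<n\le N$). Consequently $F$ is an immersion between compact connected complex manifolds of equal dimension, hence a biholomorphism $M\cong\bcp^n$ carrying $\omega$ to $\omega_{FS}$ and $g$ to $g_{FS}$; torus-equivariance of $F$ makes it an equivariant symplectomorphism, as claimed. I expect the difficulty to sit entirely in this last step --- identifying the first eigenfunctions of $\bcp^N$ with Killing potentials, controlling the trace-free part of their Hessians, and running the representation-theoretic argument excluding a nontrivial normal bundle --- while the earlier steps are essentially the standard equality discussion in Bourguignon--Li--Yau.
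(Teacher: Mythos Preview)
Your argument is correct and takes a genuinely different route from the paper. The paper works entirely in action--angle coordinates: using Lemma~\ref{diagonalB} (the balancing matrix $B$ can be chosen diagonal in the toric case) and the product identity~\eqref{RosaTrick1}, it derives the relation~\eqref{RosaTrick2} and shows that every lattice point of $P$ other than a fixed vertex lies on a single affine hyperplane, forcing $P$ to be a simplex and $M=\bcp^n$; then, on $\bcp^n$, a further explicit computation shows that the diagonal functions $\Psi_{mm}$ coincide with the moment-map coordinates $x_m$, so by Proposition~\ref{propKEeigenfunction} the metric is K\"ahler--Einstein and hence Fubini--Study by uniqueness of extremal toric metrics. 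Your proof instead observes that equality forces the balanced map $\tilde F=B\circ F$ to be an isometric holomorphic immersion (via the pointwise identity $\sum|\nabla\psi_{ij}|^2\equiv 2n$ and the $\partial\bar\partial$-lemma), and then uses the restriction formula for the Laplacian along the minimal K\"ahler submanifold $\tilde F(M)\subset\bcp^N$ together with a representation-theoretic argument (irreducibility of trace-free Hermitian forms under $U(N)$, plus Obata to rule out the degenerate case) to exclude any nontrivial normal bundle. This is more geometric, avoids the explicit toric computations, bypasses both Proposition~\ref{propKEeigenfunction} and the uniqueness of extremal metrics, and in fact proves the stronger, non-toric statement that any compact K\"ahler manifold saturating the Bourguignon--Li--Yau bound for an embedding with $[\Phi^*\omega_{FS}]=[\omega]$ must be $(\bcp^n,\omega_{FS})$.

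One small gap: your last sentence asserts that ``torus-equivariance of $F$ makes it an equivariant symplectomorphism'', but it is $\tilde F=B\circ F$, not $F$, that you have shown to be isometric, and you have not argued that $B$ commutes with the torus. Either invoke Lemma~\ref{diagonalB} (so $B$ is diagonal and $\tilde F$ is equivariant), or note that $\tilde F$ transports the $T$-action to a maximal torus of holomorphic isometries of $(\bcp^n,g_{FS})$, which can then be conjugated to the standard torus by a further isometry.
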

It was previously known (see \cite{BGM}) that the Fubini-Study metric on $\bC\bP^n$ is determined by the spectrum among all K\"ahler metrics on $\bC\bP^n$ compatible with the standard complex structure. It was also proved by Tanno (see \cite{Tanno}) that, if a K\"ahler manifold of real dimension less than $12$ has the same spectrum as $\bC\bP^n$ with the Fubini-Study metric, then it is holomorphically isometric to it. A simple consequence of the above theorem is that the spectrum of the Laplacian of a toric K\"ahler metric on an integral toric manifold determines if the manifold is $\bC\bP^n$ endowed with the Fubini-Study metric.

\begin{corollary}
An integral toric K\"ahler manifolds which has the same spectrum as $(\bC\bP^n,\omega_{FS},J_0)$ is holomorphically isometric to it. 
\end{corollary}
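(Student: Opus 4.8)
The plan is to promote the isospectrality hypothesis to an equality case of Theorem~\ref{thm_bound_lambda_1} and then quote Theorem~\ref{saturate}. Throughout I normalise $\omega_{FS}$ so that the moment polytope of $(\bC\bP^n,\omega_{FS})$ is the standard simplex $\Delta_n=\operatorname{conv}\{0,e_1,\dots,e_n\}\subset\bR^n$; this polytope is integral, and its only lattice points are its $n+1$ vertices. Let $(M,\omega,J,g)$ be an integral toric K\"ahler manifold whose Laplace spectrum coincides with that of $(\bC\bP^n,\omega_{FS},J_0)$. The first step is purely spectral: from the short-time heat-trace asymptotics $\sum_i e^{-\lambda_i t}\sim(4\pi t)^{-d/2}\operatorname{Vol}(g)+\cdots$ one reads off $\dim_{\bR}M=2n$ (and also $\operatorname{Vol}(M,g)=\operatorname{Vol}(\bC\bP^n,\omega_{FS})$, though only the dimension will be used), and of course $\lambda_1(g)=\lambda_1(\bC\bP^n,\omega_{FS})$.

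The second step is to evaluate $\lambda_1$ of the model. As recorded just after Theorem~\ref{thm_bound_lambda_1}, the Fubini--Study metric saturates the bound of that theorem, so, writing $N_0+1=\sharp(\Delta_n\cap\bZ^n)=n+1$, one has $\lambda_1(\bC\bP^n,\omega_{FS})=\frac{2n(N_0+1)}{N_0}=2(n+1)$, hence $\lambda_1(g)=2(n+1)$. The third step fixes the combinatorics of $M$: let $\pol\subset\bR^n$ be its moment polytope and $N+1=\sharp(\pol\cap\bZ^n)$. Since $M$ is integral, the integral case of Theorem~\ref{thm_bound_lambda_1} gives $2(n+1)=\lambda_1(g)\le\frac{2n(N+1)}{N}=2n+\frac{2n}{N}$, so $N\le n$; on the other hand $\pol$ is a full-dimensional convex polytope in $\bR^n$, hence has at least $n+1$ vertices, and integrality forces these $n+1$ distinct points into $\bZ^n$, so $N\ge n$. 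Therefore $N=n$ and $\lambda_1(g)=2(n+1)=\frac{2n(N+1)}{N}$, i.e.\ $g$ achieves equality in Theorem~\ref{thm_bound_lambda_1}. Theorem~\ref{saturate} then yields an equivariant symplectomorphism $\phi\colon M\to\bC\bP^n$ with $\phi^*\omega_{FS}=\omega$ and $\phi^*g_{FS}=g$; since the complex structure of a K\"ahler structure is determined by its metric and symplectic form via $\omega(\cdot,\cdot)=g(J\cdot,\cdot)$, this forces $\phi^*J_0=J$, so $\phi$ is a holomorphic isometry, which is exactly the assertion.

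I expect the one genuinely delicate point to be the normalisation of $\omega_{FS}$ used silently in the second step: for a rescaled form $c\,\omega_{FS}$ with $c\ge 2$ the model no longer saturates Theorem~\ref{thm_bound_lambda_1}, so the identity $\lambda_1=\frac{2n(N_0+1)}{N_0}$ breaks down and one would then have to feed in the volume (Weyl-law) comparison together with Ehrhart-type estimates to recover the value of $N$; pinning $\omega_{FS}$ to the minimal normalisation with moment polytope $\Delta_n$ is what keeps the argument clean. Apart from that, the corollary is a formal consequence of Theorems~\ref{thm_bound_lambda_1} and~\ref{saturate} plus the elementary fact that a full-dimensional integral polytope in $\bR^n$ contains at least $n+1$ lattice points.
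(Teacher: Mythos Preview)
The paper does not spell out a proof of this corollary; it is simply announced as ``a simple consequence'' of Theorem~\ref{saturate}. Your argument supplies exactly the missing details and is almost certainly what the authors intended: extract the real dimension $2n$ and $\lambda_1(g)=2(n+1)$ from the spectrum, use the integral case of Theorem~\ref{thm_bound_lambda_1} to force $N\le n$, use the trivial lower bound $N+1\ge n+1$ coming from the vertices of a full-dimensional integral polytope, conclude $N=n$ and equality in the bound, and then invoke Theorem~\ref{saturate}. The final remark that $J$ is determined by $(g,\omega)$, so that the equivariant symplectomorphism of Theorem~\ref{saturate} is automatically biholomorphic, is the correct way to upgrade the conclusion to ``holomorphically isometric''.

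Your discussion of the normalisation of $\omega_{FS}$ is appropriate: the paper does fix $\omega_{FS}$ so that the moment polytope is the unit simplex (this is implicit in the statement that the Fubini--Study metric realises the bound, and explicit in Proposition~\ref{CPnsaturates}), so your ``delicate point'' is already handled by the paper's conventions and no Ehrhart-type argument is needed.
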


Another interesting question is that of spectrally characterizing either constant scalar-curvature, extremal or K\"ahler-Einstein toric K\"ahler metrics. In \cite{DGS} the authors prove that the equivariant spectrum determines if a toric K\"ahler metric has constant scalar curvature. A variation of the argument there would show that the equivariant spectrum also determines if a metric is extremal.

Going back to the first eigenvalue, there are various bounds that one can write down for toric K\"ahler manifolds using Bourguignon--Li--Yau's bound, see~\S\ref{BLYboundsub}. It would be interesting to see what the best bound is for a given toric manifold, once we fix the polytope. In particular, one could hope to improve the bound in Theorem  \ref{thm_bound_lambda_1} for special classes of manifolds (monotone, Fano..) or special classes of metrics say extremal toric K\"ahler metric, or K\"ahler-Einstein metrics. In \cite{AJK} the authors prove that a toric K\"ahler--Einstein manifold whose connected component of automorphism group is a torus is never {\it $\lambda_1$--extremal}, where $\lambda_1$--extremal means extremal for the first eigenvalue with respect to local variations in the K\"ahler metrics space. Hence, in general, we cannot expect a toric K\"ahler--Einstein metric to saturate fine bounds. Another natural candidate to consider is a balanced metric when it exists, see discussion~\S\ref{BLYboundsub}.

However, the K\"ahler--Einstein property is somewhat reflected in the first eigenvalue. In fact, one can prove an improvement and a converse of Matsushima Theorem~\cite{matsushima} in~\S\ref{moment_map_eigenfunction}. We show that a toric K\"ahler metric is K\"ahler-Einstein if and only if the coordinates of its moment map are eigenfunctions for $\lambda_1$. 
\begin{proposition}\label{propo:moment_map_eigenfunction}\footnote{It is possible that this result was previously known but the authors did not find a reference for it in the literature and thus state it and prove it.} \label{propKEeigenfunction} Let $(M,\omega, T)$ be a compact symplectic toric orbifold with moment map $x:M\ra \kt^*$. Then $(M,g,J,\omega, T)$ is a K\"ahler--Einstein toric {orbifold} with Einstein constant $\lambda$ if and only if, up to an additive constant, the moment map satisfies
 \begin{equation}\label{toricEIGEN}
 2\lambda\langle x,b\rangle=\Delta^{g} \langle  x,b\rangle \;\;\;\;\;\;  \forall b\in \kt.\end{equation} 
In this case, $2\lambda$ is the smallest non-vanishing eigenvalue for the  K\"ahler--Einstein orbifold toric metric.
\end{proposition}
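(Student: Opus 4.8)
The plan is to reduce the statement to a single Bochner--type identity on the dense smooth locus of $M$. For $b\in\kt$ let $X_b$ be the fundamental vector field of the corresponding one-parameter subgroup of $T$ and put $h_b=\langle x,b\rangle$; since $X_b$ is a Hamiltonian Killing field one has $\mathrm{grad}\,h_b=\pm JX_b$. Let $\mathrm{Ric}$ denote the Ricci tensor of $g$, and for a function $f$ write $\mathrm{Ric}(df):=\iota_{\mathrm{grad}\,f}\,\mathrm{Ric}$ for the associated $1$-form. The claim is that
\[
d\bigl(\Delta^{g}h_b\bigr)=2\,\mathrm{Ric}\bigl(dh_b\bigr)\qquad\text{for every }b\in\kt .
\]
I would prove this using (i) the Weitzenb\"ock/Bochner identity $\Delta_{\mathrm{Hodge}}X^\flat=2\,\mathrm{Ric}(X^\flat)$, valid for any Killing field $X$; and (ii) the K\"ahler facts that $\Delta_{\mathrm{Hodge}}$ and the Ricci endomorphism each commute with $J$, while on functions $\Delta_{\mathrm{Hodge}}=\Delta^{g}$ and commutes with $d$. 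Writing $X_b^\flat=\pm J(dh_b)$, applying $\Delta_{\mathrm{Hodge}}$, and commuting $J$ past $\Delta_{\mathrm{Hodge}}$ and $\mathrm{Ric}$ gives the identity; the two signs cancel. (As a check, on the round $S^2$ this reads $d\Delta^{g}x_3=2\,dx_3$.) Being pointwise on the dense smooth locus, the identity holds on the whole orbifold.

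Granting it, the two implications are almost formal. If $g$ is K\"ahler--Einstein, $\mathrm{Ric}=\lambda g$, then $\mathrm{Ric}(dh_b)=\lambda\,dh_b$, so $d(\Delta^{g}h_b)=2\lambda\,dh_b$, and since $M$ is connected $\Delta^{g}\langle x,b\rangle=2\lambda\langle x,b\rangle+\langle c,b\rangle$ for some fixed $c\in\kt^*$ (both sides are linear in $b$). Here $\lambda>0$: if $\lambda=0$ then $\Delta^{g}\langle x,b\rangle$ is a constant of vanishing integral, hence $0$, forcing $\langle x,b\rangle$ constant, absurd because the moment image has nonempty interior; and if $\lambda\neq0$ then, for suitable $b$, $\langle x,b\rangle+\langle c,b\rangle/(2\lambda)$ is a non-constant eigenfunction of $\Delta^{g}$ with eigenvalue $2\lambda$, which is therefore positive. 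Replacing $x$ by $x+c/(2\lambda)$ --- the moment map being defined only up to a constant in $\kt^*$ --- produces \eqref{toricEIGEN}. Conversely, if \eqref{toricEIGEN} holds then $d(\Delta^{g}\langle x,b\rangle)=2\lambda\,d\langle x,b\rangle$, so the identity yields $(\mathrm{Ric}-\lambda g)(\mathrm{grad}\,h_b,\cdot)=0$ for all $b$. The symmetric tensor $\mathrm{Ric}-\lambda g$ is $J$-invariant (on a K\"ahler manifold $\mathrm{Ric}(JU,JV)=\mathrm{Ric}(U,V)$), hence it also vanishes when one of its slots is $J\,\mathrm{grad}\,h_b=\pm X_b$. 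On the dense open $T$-orbit the $X_b$ span the tangent space to the orbit, which is Lagrangian, so the $X_b$ and the $\mathrm{grad}\,h_b$ together span the full tangent space; therefore $\mathrm{Ric}-\lambda g\equiv0$ there, and by density on $M$. Thus $g$ is K\"ahler--Einstein with Einstein constant $\lambda$.

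For the last sentence: in the K\"ahler--Einstein case we have just exhibited, for suitable $b$, a non-constant eigenfunction $\langle x,b\rangle$ of $\Delta^{g}$ with eigenvalue $2\lambda>0$. That $2\lambda$ is the smallest non-zero eigenvalue is the classical K\"ahler refinement of Lichnerowicz's estimate, $\lambda_1\ge2\lambda$ on a compact K\"ahler manifold with $\mathrm{Ric}=\lambda g>0$ (see e.g.\ \cite{matsushima}), whose Bochner-technique proof goes over to closed K\"ahler orbifolds. I expect the main obstacle to be the Bochner identity of the first paragraph, where the sign conventions and the commutation of $J$ with $\Delta_{\mathrm{Hodge}}$ and $\mathrm{Ric}$ must be tracked carefully; the only subtle point in the converse is the $J$-invariance of $\mathrm{Ric}-\lambda g$, without which vanishing of $(\mathrm{Ric}-\lambda g)(\mathrm{grad}\,h_b,\cdot)$ along the Lagrangian orbit directions alone would not suffice. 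The orbifold structure is harmless: every step is pointwise on the dense smooth locus or a standard integration by parts on a closed orbifold.
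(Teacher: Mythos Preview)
Your argument is correct, and the key identity you establish, $d(\Delta^{g}h_b)=2\,\mathrm{Ric}(dh_b)$, is exactly the paper's equation~\eqref{toricBOCHNER} rewritten intrinsically (there it reads $d\Delta^{u}\langle x,b\rangle=-2\rho^{g_u}(X_b,\cdot)$, which is the same statement once one unwinds $\rho(X_b,\cdot)=\mathrm{Ric}(JX_b,\cdot)$ and $\mathrm{grad}\,h_b=\pm JX_b$). The difference is in how each side derives it: the paper computes the Laplacian and the Ricci form explicitly in action--angle coordinates via the symplectic potential, using Abreu's formula~\eqref{ricciFORMULA}, whereas you obtain it from the Weitzenb\"ock identity $\Delta_{\mathrm{Hodge}}X^\flat=2\,\mathrm{Ric}(X^\flat)$ for Killing fields together with the K\"ahler commutation of $J$ with $\Delta_{\mathrm{Hodge}}$ and $\mathrm{Ric}$. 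Your route is coordinate-free and works verbatim for any Hamiltonian Killing field on a compact K\"ahler manifold, which is a genuine gain in generality; the paper's route is more elementary and self-contained within the toric formalism already set up. For the converse, the paper again plugs directly into~\eqref{ricciFORMULA} and reads off $\rho=\lambda\omega$, while you argue that $\mathrm{Ric}-\lambda g$ annihilates both the $\mathrm{grad}\,h_b$ and (via $J$-invariance) the $X_b$, which span $TM$ on the open orbit; both are fine, yours being slightly less direct but again independent of the toric coordinate machinery. Finally, you actually address the ``$2\lambda=\lambda_1$'' clause by invoking the K\"ahler Lichnerowicz bound, which the paper's proof does not spell out.
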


Matsushima's theorem implies that a necessary condition for a toric K\"ahler metric to be K\"ahler-Einstein is that its $\lambda_1$ be a multiple eigenvalue with multiplicity at least equal to half the dimension of the manifold. What's more, it follows from the above proposition that one can see if a metric is K\"ahler- Einstein by simply checking if its moment map coordinates are eigenfunctions for $2\lambda$.

On a toric manifold endowed with a  torus invariant metric one can consider a toric version of $\lambda_1$ namely $\lambda_1^T$ defined to be the smallest non-zero invariant eigenvalue of the Laplacian i.e. the smallest eigenvalue of the Laplacian restricted to torus invariant functions. We clearly have $\lambda_1\leq \lambda_1^T$. In \cite{abreufreitas}, Abreu--Freitas studied $\lambda_1^T$ for the simplest toric manifold, namely $S^2$ with the usual $S^1$ action by rotations around an axis. They proved it was unbounded (both above and below) among $S^1$-invariant metrics. In this paper we generalize their results, by using an original approach for the upper bound, on all toric manifolds. We are able to prove the following.
\begin{theorem} \label{THEOminLAMBDA1} 
Let $(M,\omega, T)$ be a compact symplectic toric orbifold, let $\mK_{\omega}^T$ be the set of all toric K\"ahler metrics on $(M,\omega, T)$. Then,
$$\inf\{\lambda_1^T(g)\,|\, g\in\mK_{\omega}^T\}=0.$$ and
$$\sup\{\lambda_1^T(g)\,|\, g\in\mK_{\omega}^T\}=+\infty.$$
\end{theorem}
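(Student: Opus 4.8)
The plan is to establish the two statements separately, using the Guillemin--Abreu parametrization of toric K\"ahler metrics on $(M,\omega,T)$ by symplectic potentials $u$ on the moment polytope $\pol$, together with the variational characterization of $\lambda_1^T$ via the Rayleigh quotient restricted to $T$-invariant functions. The key observation is that a $T$-invariant function on $M$ is, away from the singular set, a function on the open polytope $\mathring\pol$, and both the Dirichlet energy and the $L^2$-norm of such a function can be written as integrals over $\pol$ against measures built from $u$: schematically $\int_M |\nabla f|_g^2\,dV_g = \int_{\pol} \langle G_u \nabla f,\nabla f\rangle\,dx$ and $\int_M f^2\,dV_g=\int_{\pol} f^2\,dx$, where $G_u=(\Hess u)^{-1}$ and $dx$ is Lebesgue measure (this is exactly the computation underlying the toric scalar curvature formula and is what makes $\lambda_1^T$ a purely one-dimensional-over-the-polytope quantity, independent of $n$ in a useful way). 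So the whole theorem reduces to: by choosing the symplectic potential $u$ appropriately we can make the best constant in the weighted Poincar\'e-type inequality on $\pol$ either arbitrarily small or arbitrarily large.

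For the infimum-zero statement, I would degenerate the metric so that the polytope ``pinches'': pick a hyperplane slicing $\pol$ into two pieces of comparable Lebesgue measure, and build a sequence of symplectic potentials $u_j$ whose Hessians blow up (so $G_{u_j}\to 0$) in a thin slab around that hyperplane while staying controlled elsewhere. Plugging in a fixed test function $f$ that is $+1$ on one side, $-1$ on the other, and interpolates across the slab, the numerator $\int_\pol \langle G_{u_j}\nabla f,\nabla f\rangle\,dx$ is supported in the slab where $G_{u_j}$ is tiny, hence $\to 0$, while the denominator stays bounded below; since $f$ has mean zero this forces $\lambda_1^T(g_{u_j})\to 0$. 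The only care needed is to check $u_j$ remains an admissible symplectic potential (Guillemin boundary behavior plus strict convexity), which one arranges by adding the degenerating bump to the canonical Guillemin potential.

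For the supremum-infinity statement I would run the complementary degeneration: make $\Hess u$ \emph{small} (so $G_u$ \emph{large}) on a large-measure region, forcing every mean-zero function to have large Dirichlet energy relative to its $L^2$-norm. Concretely, replace $u$ by $u+\varepsilon^{-1}\phi$ for a fixed strictly convex $\phi$ on a subpolytope away from the facets; as $\varepsilon\to 0$ the metric on the corresponding open region is rescaled so that distances blow up, and a Cheeger/Faber--Krahn-type lower bound (or directly: for any mean-zero $f$, $\int\langle G_u\nabla f,\nabla f\rangle \geq c\,\varepsilon^{-1}\int|\nabla f|^2 \geq c'\varepsilon^{-1}\int f^2$ on that region, combined with the fact that an invariant eigenfunction cannot concentrate entirely off that region without paying a comparable price) gives $\lambda_1^T\to\infty$. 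Alternatively, and perhaps more cleanly, one exhibits directly a collar where the metric contains a long thin cylinder of $T$-invariant cross-section and uses domain monotonicity for the Neumann problem. The main obstacle, in both directions, is the bookkeeping at the boundary of $\pol$: one must verify that the deformed potentials genuinely define smooth toric K\"ahler metrics on the closed manifold (the Abreu regularity/positivity conditions at every face), and that the crude test-function estimates on $\mathring\pol$ are not spoiled by contributions from neighborhoods of the lower-dimensional strata; handling the orbifold case adds only the usual rational-labelling bookkeeping and no new analytic difficulty.
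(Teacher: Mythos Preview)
Your treatment of the infimum is sound in spirit and would work, though it is more elaborate than necessary: the paper simply takes $u_c=u_o+\tfrac{c}{2}x_i^2$ and uses the linear coordinate $x_i$ (shifted to have mean zero) as a single test function. A short determinant computation shows $u_c^{ii}\to 0$ as $c\to\infty$, so the Rayleigh numerator $\int_{\pol} u_c^{ii}\,dx$ tends to zero while the denominator $\int_{\pol} x_i^2\,dx$ is fixed. No pinching slab or step-function test is needed.

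Your supremum argument, however, has a real gap. You correctly state the goal---make $\Hess u$ small so that $G_u=(\Hess u)^{-1}$ is large---but the concrete construction you give does the opposite: replacing $u$ by $u+\varepsilon^{-1}\phi$ with $\phi$ strictly convex makes $\Hess u$ \emph{large}, hence $G_u$ \emph{small}, which drives $\lambda_1^T$ down, not up. (Your remark that ``distances blow up'' refers to the $\Hess u$-metric on the polytope, but the Dirichlet form involves the inverse $(\Hess u)^{-1}$, so large distances here mean small energy.) The genuine difficulty, which your sketch does not touch, is how to make $\Hess u$ uniformly small while keeping it \emph{strictly positive} and preserving the Guillemin boundary behaviour; you cannot simply subtract a convex function. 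The paper's device is specific: take $u^s=u_o-\tfrac{1}{s}u_o^s$ where $u_o^s$ is the Guillemin potential of the dilated polytope $s\pol$, so that
\[
\Hess u^s=\frac{1}{2}\sum_k\Bigl(\frac{1}{L_k}-\frac{1}{sL_k^s}\Bigr)\nu_k\otimes\nu_k
\]
stays positive (since $L_k^s>L_k$ on $\pol$) yet is of order $(s-1)$ as $s\to 1^+$.

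Even granting a correct family of potentials, your proposed lower bound for $\lambda_1^T$ is too vague to carry the conclusion. The inequality ``$\int\langle G_u\nabla f,\nabla f\rangle\geq c\varepsilon^{-1}\int|\nabla f|^2\geq c'\varepsilon^{-1}\int f^2$ on that region'' would need a Poincar\'e inequality with the mean of $f$ taken over the \emph{subregion}, not over $\pol$, and your clause about eigenfunctions not concentrating off the region is an assertion, not an argument. The paper does not attempt a direct lower bound at all: it argues by contradiction. Assuming $\lambda_1^T(s)\leq\kappa$, the normalized eigenfunctions $f_s$ are bounded in $H^1(M,g_{u_o})$ (because $H^s\geq H_o$ pointwise), hence have an $L^2$ limit $f$; then, using that $(s-1)H^s$ converges to a positive-definite limit on compact subsets of $\mathring\pol$, one shows $\int_K|\nabla f_s|^2/(s-1)$ is bounded, forcing $f$ to be constant on every compact $K$, hence identically zero, contradicting $\|f\|_{L^2}=1$. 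This compactness step is the heart of the argument and is missing from your outline.
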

Combining Theorem~\ref{thm_bound_lambda_1} and \ref{THEOminLAMBDA1}, we see that there are toric K\"ahler manifolds for which $\lambda_1$ does not coincide with $\lambda_1^T$. For toric K\"ahler--Einstein metrics, it follows from Matsushima Theorem~\cite{matsushima} that $\lambda_1=\lambda_1^T$ as there are invariant eigenfunctions for $\lambda_1$. It would be interesting to characterize those toric K\"ahler manifolds for which this occurs. Given a weight vector $\w\in \bZ^n$, one could also define $\w$-equivariant $\lambda_1$ which we denote by $\lambda_1^\w$ as the lower non-vanishing eigenvalue of the Laplacian restricted to the set of $\w$-equivariant functions
$$
\{f\in\mathcal{C}^\infty(M,\bC):f(e^{\bi\theta}p)=e^{{\bi}\theta\cdot \w}f(p), \quad \forall p\in M,\, \theta \in \bR^n\}.
$$
One could prove a similar result in this setting and again it would be interesting to understand which metrics have $\lambda_1=\lambda_1^\w$ and how this depends on $\w$. Note that $\lambda_1^T=\lambda_1^0$. Recently, in \cite{hm}, Hall-Murphy proved that on any toric manifolds $\lambda_1^T$ restricted to the class of toric K\"ahler metrics whose scalar curvature is non-negative is bounded and this generalizes another result in \cite{abreufreitas}.

The paper is organized as follows. In section \ref{background} we quickly review some basic facts about toric manifolds and their toric K\"ahler metrics. The reader is encouraged to consult the references for more details and proofs. We also give a proof of  Proposition \ref{propKEeigenfunction}.  In section \ref{invariant_lambda_1} we study $\lambda_1^T$ and generalize Abreu-Freitas' result to prove Theorem \ref{THEOminLAMBDA1}. Section \ref{bound_lambda_1} deals with $\lambda_1$ and there we prove Theorems \ref{thm_bound_lambda_1} and \ref{saturate}.

\noindent \textbf{Acknowledgements.} The authors would like to thank Emily Dryden and Julien Keller for interesting conversations concerning the topic of this paper and also Stuart Hall and Tommy Murphy for sharing their preprint.

\section{Background}\label{background}
\subsection{Toric K\"ahler geometry}
This section does not contain all the ingredients of symplectic toric geometry needed in subsequent sections, we only lay down the notation and refer to the classical references for this theory (in particular for proofs of what is claimed in this section) like \cite{abreu,abreuOrbifold,delzant:corres,don:estimate,guillMET,convexMoment,LT:orbiToric}.

Let $(M^{2n},\omega,T^n)$ be a compact toric symplectic orbifold. It admits a moment map $x : M \ra \pol\subset \kt^*$ where $\kt= \mbox{Lie } T$ is the Lie algebra of $T$ and $\kt^*$ is its dual such that for all $a\in\kt$ $$-d\langle x,a\rangle = \omega(X_a,\cdot)$$ where $X_a$ is the vector field on $M$ induced by the $1$--parameter subgroup associated to $a$. The image of $x$, that we denote $\pol$, is called the moment polytope. It is a convex simple (i.e. its vertex are the intersection of exactly $n$--facets) polytope in $\kt^*$.

\begin{definition} Consider $P\subset \kt^*$ a simple polytope, $\nu=\{\nu_1,\dots, \nu_d\}$ a set of vectors in $\kt$ which are normal to the facets of $\pol$ and inward pointing. Let $\Lambda$ be the lattice in $\kt$ such that $T=\kt/\Lambda$.
If $\nu\subset \Lambda$, the triple $(\pol, \nu,\Lambda)$ is called a {\bf labelled rational}\footnote{To recover the original convention introduced by Lerman and Tolman in the rational case, take $m_k\in \bZ$ such that $\frac{1}{m_k}\nu_
k$ is primitive in $\Lambda$ so $(\pol, m_1,\dots m_d,\Lambda)$ is a rational labelled polytope.} polytope. If each subset of vectors in $\nu$, normals to facets meeting at a vertex, forms a basis of $\Lambda$, then we say that $(\pol, \nu,\Lambda)$ is {\bf Delzant}.\end{definition} 

The Delzant--Lerman--Tolman correspondence states that compact toric symplectic orbifolds are in one to one correspondence with rational labelled polytopes and are smooth if and only if the rational labelled polytopes is Delzant. 

In this text, we often identify $\kt$ with $\bR^n$ and $\Lambda$ with $\bZ^n$.

\begin{definition}
Let $(\pol, \nu)$ be a labelled polytope. The functions $L_1,\dots, L_d\in \mbox{Aff}(\kt^*, \bR)$ are said to be the defining functions of $(\pol, \nu)$ if $\pol= \{ x\in\kt^*\,|\,L_k(x)\geq0\}$ and $\frac{dL_k}{dx}=\nu_k$.
\end{definition}
Let $\mathring{\pol}$ denote the interior of $\pol$. On the pre-image of the interior of the polytope $\mathring{M}=x^{-1}(\mathring{\pol})$, the action of $T$ is free. The action--angle coordinates $(x,\theta)=(x_1,\dots, x_n,\theta_1\dots, \theta_n)$ are local coordinates on $\mathring{M}$ used to (locally) identify $\mathring{M}$ with $\mathring{P}\times T$ where the first projection coincides with the moment map and 
\begin{equation}\label{eq:OMEGAinaacoord}
\omega=\sum_{i=1}^ndx_i\wedge d\theta_i. 
\end{equation}

As it is shown in \cite{a2}, the space of compatible $T$--invariant K\"ahler metrics on $(M,\omega,T)$ is parametrized by the set of {\it symplectic potentials} which is denoted by $\mS(\pol,\nu)$ (up to the addition of an affine linear function). The set $\mS(\pol,\nu)$ is defined as the subset of functions $u\in C^{\infty}(\mathring{\pol},\bR)\cap C^{0}(\pol,\bR)$, such that 
 \begin{itemize}
   \item[(i)] $u-\frac{1}{2}\sum_{k=1}^d L_k \log L_k \in C^{\infty}({\pol},\bR)$;\\
   \item[(ii)] the restriction of $u$ to $\mathring{\pol}$ is strictly convex; \\
   \item[(iii)] for each face $F$ of $\pol$, the restriction of $u$ to $\mathring{F}$ (the relative interior of $F$) is strictly convex.
 \end{itemize}

\begin{definition}\label{guilleminPOTENTIAL} The Guillemin potential $u_o \in \mS(\pol,\nu)$ is 
\begin{equation} u_o = \frac{1}{2} \sum_{i=1}^d (L_k \log L_k -L_k). \end{equation} It corresponds to the K\"ahler toric metric on $(M,\omega)$ obtained via the K\"ahler reduction of $\bC^d$, see~\cite{guillMET}.
\end{definition}

Given $u\in \mS(\pol,\nu)$, the metric defined by
\begin{equation}\label{ActionAnglemetric} g_u= \sum_{i,j=1}^nu_{ij}dx_i\otimes dx_j +u^{ij}d\theta_i\otimes d\theta_j\end{equation} 
where $u_{ij}=\frac{\del^2 u}{\del x_i\del x_j}$ and $(u^{ij})=(u_{ij})^{-1}$, is a $\kt$--invariant 
 K\"ahler metric on $\mathring{\pol}\times T\simeq \mathring{M}$ compatible with $\omega$. Conditions $(i), (ii),(iii)$ ensure that $g_u$ is the restriction of a smooth metric on $M$. For convenience, we denote $H_{ij}^u=u^{ij}$, $\,G^u_{ij} = u_{ij}$, $\, H^u=(H_{ij}^u)$ and $\, G^u=(G_{ij}^u)$. One can prove that any toric K\"ahler structure on $(M,\omega)$ can be written using a symplectic potential in $\mS(\pol,\nu)$ as above ({see \cite{a2}}). 

{\begin{rem}\label{remCPLXcoor} Given $u \in \mS(\pol,\nu)$ the map ${\frac{\partial u}{\partial x}} : \mathring{P} \longrightarrow \kt$ is a diffeomorphism (because $u$ is strictly convex) and the coordinates $z= y +\bi \theta$, where $y={\frac{\partial u}{\partial x}}$, are local complex coordinates on $\mathring{M}$. See \cite[\S A1.3]{GuMM}) for instance.
\end{rem}}

 {Note that in \cite{guillMET}, $u_o$ is defined to be $u_o = \frac{1}{2} \sum_{i=1}^d (L_k \log L_k )$. This will yield the same metric as the metric we define via formula \ref{ActionAnglemetric} and complex coordinates which are related to the ones in \cite{guillMET} by an overall translation.}

Abreu~\cite{abreu} computed the curvature of a compatible K\"ahler toric metric, $g_{u}$, in terms of its symplectic potential $u$. The scalar curvature of $g_u$ is the pull-back by $x$ of the function \begin{equation}\label{abreuForm} \mbox{scal}_{u}=-\sum_{i,j=1}^n\frac{\del^2 H^u_{ij}}{\del x_i \del x_j}. \end{equation}  Moreover, the Ricci curvature is \begin{equation}
\label{ricciFORMULA} \rho^{g_u} =\frac{-1}{2}\sum_{i,l,k} H^u_{li,ik}dx_k\wedge d\theta_l.\end{equation} {(See for instance \cite{lejmi} where the above formula is proved in the more general context of almost K\"ahler metrics.)}

 \subsection{K\"ahler--Einstein metrics and moment map coordinates as eigenfunctions of the Laplacian}\label{moment_map_eigenfunction}
Let $(M,g_u,J,\omega, T)$ be a compact K\"ahler toric manifold with moment map $x$ and denote by $\Delta^{u}$, the Laplacian with respect to the Riemannian metric $g_u$. Recall that $(M,g_u,J,\omega)$ is K\"ahler--Einstein if there exists $\lambda$ such that $\lambda\omega=\rho^{g_u}$ where $\rho^{g_u}$ is the Ricci form of the Chern and Levi connection. We say that $\lambda$ is the Einstein constant. In the compact toric setting, $\lambda>0$.

Next we proceed to prove Proposition (\ref{propo:moment_map_eigenfunction})
\begin{proof} Expressing the Laplacian (i.e $\Delta^g= -\mbox{Div}^g\mbox{grad}^g$) in the action angle coordinates \eqref{ActionAnglemetric}, we get    
 \begin{equation}\label{toricLAP}\Delta^{u}= - \sum_{i,j=1}^n \left[G_{ij}\frac{\del^2}{\del \theta_i\del \theta_j} + \frac{\del}{\del x_i}\left( H_{ij} \frac{\del}{\del x_j}\right)\right],\end{equation} 
 so that
 \begin{equation}\label{toricBOCHNER}
   d\Delta^{u}\langle x,b\rangle= -\sum_{i,j,k=1}^n  H_{ij,ik}b_j dx_k =-{2}\rho^{g_u}(X_b,\cdot)\;\;\;\;\;\;  \forall b\in \kt\end{equation} 
    using \eqref{ricciFORMULA}. 
   From~\eqref{toricBOCHNER}, we see that $\Delta^{g_u}x$ is a moment map for $2\rho^{g_u}$ and, in the K\"ahler--Einstein case $\lambda\omega=\rho^{g_u}$, this implies that $2\lambda x-\Delta^{g_u}x= \alpha\in \kt^*$ is constant. Thus $x-\frac{\alpha}{2\lambda}$ satisfies~\eqref{toricEIGEN}.
   
   The converse is also a simple computation. Indeed, assuming~\eqref{toricEIGEN}, we have  
 $$\Delta^{g_{u}} x_i = -\sum_{j=1}^n \frac{\partial H_{ij}}{\partial x_j} = 2\lambda x_i$$ for $i=1,\dots n$. Inserting this in~\eqref{ricciFORMULA}, we get \begin{equation}\begin{split}\rho^{g_u}(\cdot,\cdot) &=\frac{-1}{2}\sum_{i,l,k=1}^n H_{li,ik}dx_k\wedge d\theta_l \\
 &= \frac{1}{2}\sum_{l,k=1}^n \frac{\del}{\del x_k}(2\lambda x_l)dx_k\wedge d\theta_l\\
 &=\lambda\sum_{k=1}^n dx_k\wedge d\theta_k =\lambda \omega,\end{split}\end{equation} as in~\eqref{eq:OMEGAinaacoord}.\end{proof}

\section{The first invariant eigenvalue $\lambda_1^T$}\label{invariant_lambda_1}
 \subsection{Minimizing $\lambda_1^T$}

The goal of this subsection is to show the first part of Theorem \ref{THEOminLAMBDA1}. With the notation introduced in Section~\ref{background}, the first part of Theorem~\ref{THEOminLAMBDA1} would follow from
  \begin{equation}\label{minLAMBDA1}
    \inf_{u\in\mS(\pol,\nu)}\{\lambda_1(g_u)\}=0.
  \end{equation}
  An easy computation shows that for any $T$--invariant function $$\int_{M} g_u (\nabla^{g_u} f,\nabla^{g_u} f)dv_{g_u} = \int_{T^n}d\theta_1\wedge\dots\wedge d\theta_n  \int_{\pol} H^u(d f,d f)dx_1\wedge\dots \wedge dx_n.$$ Here $d f$ denotes the differential of $f$ seen as a function on $\pol$. We fix coordinates on $\kt^*$ and, by translating if necessary, we assume that $\int_{\pol} x_i\vol =0$ where we have set $\vol=dx_1\wedge\dots \wedge dx_n$. The Rayleigh characterization of the first eigenvalue tells us that for any $i=1,\dots, n$ \begin{equation}
    \lambda_1(g_u) \leq \frac{\int_{\pol} H^u(d x_i, d x_i)\vol}{\int_{\pol}x_i^2\vol}= \frac{\int_{\pol} u^{ii}\vol}{\int_{\pol}x_i^2\vol}
  \end{equation} with equality if and only if $x_i$ is an eigenfunction of the Laplacian $\Lap^{g_u}$. Since the denominator does not depend on $u$, to show \eqref{minLAMBDA1}, it is sufficient to show that we can find $u\in\mS(\pol,\nu)$ with arbitrarily small $u^{ii}$, as Abreu and Freitas did for $S^1$--invariant metrics on $S^2$ in~\cite{abreufreitas}.\\

 Take any $u_o\in\mS(\pol,\nu)$ and for any positive real number $c>0$ put $u_c= u_o +\frac{c}{2}x_i^2$. First, we will show that $u_c^{ii}$ decreases when $c$ increases. We have $\Hess u_c = \Hess u_o + cE_{i}$ where $E_{i}=(\delta_{li}\delta_{ki})_{1\leq l,k\leq n}$ and $\delta_{li}$ being the Kronecker symbol. In particular, \begin{equation}\label{determinantHessu_c}
   \det \Hess u_c = \det{\Hess} u_o + c\det M_{ii}
 \end{equation} where $M_{lk}$ denotes the $(l,k)$-minor matrix of $\Hess u_o$. Note that $M_{ii}$ is positive definite at each point in $\mathring{\pol}$ since it corresponds to the restriction of the metric $g_{u_o}$ (as a metric on $\mathring{\pol}$) to the orthogonal space to $\frac{\del}{\del x_i}$ with respect to the Euclidean metric. In particular for any $c>0$, formula~\eqref{determinantHessu_c} gives $\det \Hess u_c>0$. Now, since the $(i,i)$-minor matrices of $\Hess u_o$ and $\Hess u_c$ are the same we have
 \begin{equation}\label{decreasinguii}
   u_c^{ii}= \frac{\det M_{ii}}{\det{\Hess} u_o + c\det M_{ii}}
 \end{equation} Thus, $u_c^{ii}\ra0$ when $c\ra +\infty$.\\

 Now, we will show that $u_c\in \mS(\pol,\nu)$ for all $c>0$ by verifying each of the conditions $(i),(ii)$ and $(iii)$ of the definition, see~\S\ref{background} :
 \begin{itemize}
   \item[(i)] $u_c-\frac{1}{2}\sum_{k=1}^dL_k\log L_k = cx_i^2 +(u_o- \frac{1}{2}\sum_{k=1}^dL_k\log L_k) $ is smooth since $u_o \in \mS(\pol,\nu)$;
   \item[(ii)] let $x\in\mathring{\pol}$, $(\Hess u_c)_{x}$ is positive definite because it is the sum of a positive definite matrix namely $\Hess u_o$ with a semi-positive definite matrix. 
   \item[(iii)] let $F$ be a face of $\pol$ and $x\in\mathring{F}$. The restriction of  $(\Hess u_c)_{x}$ to the tangent space to $F$ is again the sum of a positive definite form namely $\Hess {u_o}_{|F}$ with  a semi-positive definite form.
 \end{itemize} Hence $u_c\in \mS(\pol,\nu)$ for all $c>0$ and $\lambda_1(g_{u_c})\longrightarrow 0$ when $c\ra +\infty$. This proves~\eqref{minLAMBDA1}.

 \subsection{Maximizing $\lambda_1^T$}

The goal of this subsection is to show the second part of Theorem \ref{THEOminLAMBDA1}. Let $(\pol,\nu)$ be the labelled moment polytope of a symplectic toric orbifold $(M,\omega,T)$. Without loss of generality, we assume, in this section, that $0\in\mathring{\pol}$. In particular, the defining functions $L_k(x)=\langle x,\nu_k\rangle + c_k$ satisfy $L_k(0)=c_k >0$. Let $u_o\in \mS(\pol,\nu)$ be the Guillemin potential, that is, 
$$u_o=\frac{1}{2}\sum_{i=1}^d L_i \log L_i-L_i$$ 
and $G_o= \mbox{Hess }u_o$ and $H_o=(\mbox{Hess }u_o)^{-1}$. Choosing coordinates and an inner product $(\cdot,\cdot)$, we see $G_o$ and $H_o$ as matrices. For $\ps>1$, we denote $u_o^\ps$, the Guillemin potential of $\ps\pol$ which is the dilation of $\pol$ by an $\ps$-factor. The defining affine-linear functions of $\ps\pol$ are $L_k^{\ps}= \langle x,\nu_k\rangle + \ps c_k $. Consider the following family of functions on $P$
$$u^{\ps}= u_o -\frac{u_o^\ps}{\ps}.$$ 
We will show that for $s>1$, $u^{\ps}\in \mS(\pol,\nu)$. Since $u_o^\ps$ is smooth on $\pol$ when $\ps>1$, to show that $u^{\ps}\in \mS(\pol,\nu)$ we need to show that $G^\ps=\mbox{Hess }u^\ps$ is positive definite on $\mathring{\pol}$. This is clear since $L_k^{\ps}(x)> L_k(x)$ on $\pol$ and 
$$G^\ps = \frac{1}{2}\sum_{k=1}^d \left(\frac{1}{L_k} - \frac{1}{\ps L_k^\ps}\right)\nu_k\otimes \nu_k.$$
Given a face of the polytope $F$, a similar argument using only the $L_i$'s which do not vanish identically over $F$, would show that the restriction of ${Hess }(u^\ps)$ to $F$ is positive definite on $F$.~\\

In~\cite{abreufreitas}, the authors show that for the $2$--sphere, $\lambda_1^T(g_{u^{\ps}})\nearrow +\infty$ when $\ps$ goes to $1$. We will use another approach to show that the same holds in higher dimension. The rough idea is that, since $u^\ps \ra 0$ uniformly on $P$, the eigenvalues of the inverse of its Hessian should, in some way, tend to infinity and thus the Rayleigh quotient of any function should go to infinity. We write 
\begin{itemize}
\item $G_o^\ps= \Hess u_o^\ps$, and $H_o^\ps=(G_o^\ps)^{-1}$\\
\item $G^\ps=\Hess u^\ps$ and $H^\ps=(G^\ps)^{-1}$. 
\end{itemize}

We start by proving the following simple lemma.
\begin{lemma}
For any $f\in C^1(\pol)$ 
\begin{equation}\label{1stOrderBound}\int_\pol H^\ps(df,df) \vol \geq  \int_\pol H_o(df,df)\vol.\end{equation}
In particular, the variation $\lambda_1^T(\ps ):= \lambda_1^T(g_{u^{\ps}})$ is bounded below by $\lambda_1(g_{u_o})$
\end{lemma}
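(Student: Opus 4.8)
The plan is to show the matrix inequality $H^{\ps} \geq H_o$ (as quadratic forms on $\kt^*$) pointwise on $\mathring{\pol}$, which integrates immediately to \eqref{1stOrderBound}, and then to deduce the eigenvalue bound from the Rayleigh characterization. The key observation is that $H^{\ps} \geq H_o$ is equivalent, by inverting, to $G^{\ps} \leq G_o$ as quadratic forms on $\kt$, and this last inequality is transparent from the explicit formulas. Indeed, $G_o = \Hess u_o = \frac{1}{2}\sum_{k=1}^d \frac{1}{L_k}\,\nu_k\otimes\nu_k$, while from the formula already displayed in the text,
\begin{equation*}
G^\ps = \frac{1}{2}\sum_{k=1}^d \left(\frac{1}{L_k} - \frac{1}{\ps L_k^\ps}\right)\nu_k\otimes \nu_k,
\end{equation*}
so that $G_o - G^\ps = \frac{1}{2}\sum_{k=1}^d \frac{1}{\ps L_k^\ps}\,\nu_k\otimes\nu_k$, which is a sum of rank-one positive semidefinite forms, hence positive semidefinite. (On $\pol$ we have $L_k^\ps = L_k + (\ps-1)c_k > 0$ for $\ps > 1$, so each term is well defined and nonnegative.)

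First I would record that both $G_o$ and $G^\ps$ are positive definite on $\mathring{\pol}$ — for $G^\ps$ this was established just above the lemma, and for $G_o$ it is the standard fact that the Guillemin potential lies in $\mS(\pol,\nu)$. Then from $0 < G^\ps \leq G_o$ pointwise, taking inverses reverses the inequality (the map $A \mapsto A^{-1}$ is order-reversing on positive definite symmetric matrices), giving $H^\ps = (G^\ps)^{-1} \geq (G_o)^{-1} = H_o$ pointwise on $\mathring{\pol}$. Evaluating both sides on the covector $df_x$ and integrating over $\pol$ against $\vol$ yields \eqref{1stOrderBound}. This part is essentially a one-line linear-algebra argument once the formula for $G^\ps$ is in hand.

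For the second assertion, I would use the Rayleigh quotient expression for $\lambda_1^T$ recalled in the previous subsection: for a $T$-invariant $f$ with $\int_\pol f \,\vol = 0$ (or more precisely $f$ orthogonal to the constants in $L^2(M,dv_{g_{u^\ps}})$, which for $T$-invariant $f$ reduces to a condition on $\pol$ since the metric volume form is $\vol \wedge d\theta$), one has $\int_M g_{u^\ps}(\nabla f,\nabla f)\,dv = (\mathrm{vol}\,T^n)\int_\pol H^\ps(df,df)\,\vol$, and similarly for $u_o$. Since the numerator for $u^\ps$ dominates that for $u_o$ by \eqref{1stOrderBound} while the denominator $\int_\pol f^2\,\vol$ (times $\mathrm{vol}\,T^n$) is the same, the infimum over admissible $f$ — which is exactly $\lambda_1^T$ — satisfies $\lambda_1^T(g_{u^\ps}) \geq \lambda_1^T(g_{u_o}) \geq \lambda_1(g_{u_o})$, the last step being the trivial inequality $\lambda_1 \leq \lambda_1^T$ noted in the introduction. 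One subtlety to address carefully is that the orthogonality normalization for the constants depends a priori on the metric; but since $dv_{g_{u^\ps}} = \vol \wedge d\theta_1\wedge\cdots\wedge d\theta_n$ in action-angle coordinates regardless of the potential, the $L^2$-orthogonal complement of the constants among $T$-invariant functions is the metric-independent space $\{f : \int_\pol f\,\vol = 0\}$, so the comparison of Rayleigh quotients is genuinely over the same test space.

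The main obstacle is essentially bookkeeping rather than depth: one must be slightly careful that $H^\ps$ is bounded near $\partial\pol$ enough for the integrals to make sense (this follows because $u^\ps \in \mS(\pol,\nu)$, so $g_{u^\ps}$ is a genuine smooth metric on $M$ and the integral is just $\int_M g_{u^\ps}(\nabla f,\nabla f)\,dv$ for $f\in C^1$), and that the same test function $f\in C^1(\pol)$ is admissible for both Rayleigh quotients. Beyond that, the lemma is immediate from the order-reversing property of matrix inversion applied to the explicit Hessian formulas.
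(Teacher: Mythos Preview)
Your argument is correct and, in fact, cleaner than the paper's. Both proofs establish the pointwise inequality $H^{\ps}\geq H_o$ on $\mathring{\pol}$ and then integrate, but the routes to that pointwise inequality differ. The paper argues that the eigenvalues of $H_oG_o^{\ps}$ are strictly less than $1$ (via the identity $G_o^{\ps}u=\lambda G_o u$ and the fact that $L_k^{\ps}>L_k$), deduces that $\|\tfrac{1}{\ps}H_oG_o^{\ps}\|<1$, and then expands $H^{\ps}=\bigl(\mathrm{Id}+\sum_{k\geq 1}(\tfrac{1}{\ps}H_oG_o^{\ps})^k\bigr)H_o$ as a Neumann series, from which $H^{\ps}\geq H_o$ follows. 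You instead read off directly from the displayed formula for $G^{\ps}$ that $G_o-G^{\ps}=\tfrac{1}{\ps}G_o^{\ps}=\tfrac{1}{2\ps}\sum_k\tfrac{1}{L_k^{\ps}}\nu_k\otimes\nu_k\geq 0$, and then invoke the order-reversing property of inversion on positive definite matrices. Your approach is shorter and avoids a small slip in the paper (the product $H_oG_o^{\ps}$ of two symmetric matrices is not itself symmetric in general, though the paper's conclusion is salvageable since it is conjugate to the symmetric matrix $H_o^{1/2}G_o^{\ps}H_o^{1/2}$). The Neumann-series route does give an explicit expansion of $H^{\ps}$, but that extra information is not used elsewhere. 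Your handling of the Rayleigh-quotient step, including the observation that the volume form and hence the $L^2$ orthogonality to constants is potential-independent, is also correct and matches what the paper uses implicitly.
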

\begin{proof}
First note that {since} $H_o$, $H_o^\ps$, $G_o^\ps$, $G^\ps$ are symmetric matrices they have real eigenvalues. Moreover, the eigenvalues of $H_o G_o^\ps$ are, strictly smaller than $1$ on $\mathring{\pol}$. Indeed, if $\lambda$ is an eigenvalue for $H_o G_o^\ps$ and $u$ is the corresponding eigenvector, then $G_o^\ps u=\lambda G_o u$, so that 
$$\sum_{k=1}^d \left( \frac{1}{L^{\ps}_k(x)} -\frac{\lambda}{L_k(x)} \right) \langle \nu_k,u\rangle^2=0,$$
which is not possible if $\lambda\geq 1$ since $L^{\ps}_k(x)>L_k(x)$ on $\pol$. Because $H_o G_o^\ps$ is symmetric as each $H_o$ and $G^\ps$ are, this implies that $||H_o G_o^\ps||<1$. Since $G^\ps =G_o-\frac{1}{s}G_o^\ps$ and $s>1$, we have
$$H^\ps= \left(Id_n + \sum_{k=1}^\infty \left(\frac{1}{\ps} H_o G_o^\ps\right)^k\right)H_o,$$ on the interior of $\pol$. From this expression, we get that the inequality \eqref{1stOrderBound} holds for any $f\in C^1(\pol)$ and $s>1$. \end{proof}
We are now in a position to prove that for the family of metrics determined by $u^\ps$, $\lambda_1^T$ is unbounded.
\begin{proposition} $\sup\{ \lambda_1^T(\ps) \,|\,\ps>1\} = +\infty$.
\end{proposition}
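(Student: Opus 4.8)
The plan is to show that as $\ps \searrow 1$, the functions $u^\ps = u_o - u_o^\ps/\ps$ converge uniformly to $0$ on $\pol$, and to convert this into a lower bound on the eigenvalues of $H^\ps = (G^\ps)^{-1}$ that blows up. First I would fix a test function; since by the Rayleigh characterization $\lambda_1^T(\ps)$ is at most the Rayleigh quotient of any $T$-invariant function with zero mean, it is more convenient here to argue in the \emph{other} direction: I want to show $\lambda_1^T(\ps) \to +\infty$, so I must bound the Rayleigh quotient of \emph{every} admissible test function from below, uniformly. Concretely, for any $T$-invariant $f$ with $\int_\pol f \,\vol = 0$, the Rayleigh quotient equals $\int_\pol H^\ps(df,df)\,\vol \big/ \int_\pol f^2 \,\vol$, so it suffices to prove that $\int_\pol H^\ps(df,df)\,\vol \geq \varepsilon(\ps)^{-1} \int_\pol f^2\,\vol$ for some $\varepsilon(\ps) \to 0$, i.e. a Poincaré-type inequality with a constant that degenerates in our favour.

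The key mechanism is that $G^\ps = \Hess u^\ps$ has operator norm tending to $0$. Indeed $G^\ps = \tfrac12 \sum_k \big( \tfrac{1}{L_k} - \tfrac{1}{\ps L_k^\ps}\big)\nu_k\otimes\nu_k$, and since $L_k^\ps = L_k + (\ps-1)c_k$ with $c_k > 0$, each coefficient $\tfrac{1}{L_k} - \tfrac{1}{\ps L_k^\ps}$ is $O(\ps-1)$ uniformly on the compact $\pol$ (the potential singularity of $1/L_k$ at the facet $\{L_k=0\}$ is cancelled: $\tfrac{1}{L_k}-\tfrac{1}{\ps L_k^\ps} = \tfrac{\ps L_k^\ps - \ps L_k + (\ps-1)L_k}{\ps L_k L_k^\ps} = \tfrac{(\ps-1)(\ps c_k + L_k)}{\ps L_k L_k^\ps}$, wait — this still has $L_k$ in the denominator). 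So I would be more careful near the boundary: write the numerator as $(\ps-1)(\ps c_k + L_k)$ over $\ps L_k L_k^\ps$; near the facet $L_k \to 0$ this behaves like $(\ps-1)\ps c_k/(\ps L_k \cdot \ps c_k) = (\ps-1)/(\ps L_k)$, which is \emph{not} uniformly bounded. Hence $\|G^\ps\|$ does not go to zero uniformly, but $\int_\pol \mathrm{tr}\, G^\ps \,\vol$ does, because $\int_\pol \tfrac{dx}{L_k}$ converges (the integrand $1/L_k$ is integrable against $\vol$ on $\pol$, as the facet has codimension one). Therefore $\int_\pol \mathrm{tr}(G^\ps)\,\vol \leq C(\ps-1) \to 0$.

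With this in hand, the main step is: given $f$ with zero mean on $\pol$ (with respect to $\vol$), I have $\int_\pol f^2\,\vol \leq C_P \int_\pol |df|^2_{\mathrm{eucl}}\,\vol$ by the ordinary Poincaré inequality on the convex body $\pol$ (here $|\cdot|_{\mathrm{eucl}}$ is the fixed Euclidean norm, $C_P$ depending only on $\pol$). Now $|df|^2_{\mathrm{eucl}} = H_o^{-1}$-type expression is not quite what appears; rather I want to compare $|df|^2_{\mathrm{eucl}}$ with $H^\ps(df,df)$. Since $G^\ps$ is positive definite with $\int_\pol \mathrm{tr}\, G^\ps\,\vol$ small, its inverse $H^\ps$ is "large on average"; precisely, pointwise $H^\ps(df,df) \geq \|G^\ps\|^{-1}\,|df|^2_{\mathrm{eucl}} \geq (\mathrm{tr}\, G^\ps)^{-1}|df|^2_{\mathrm{eucl}}$. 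Integrating this pointwise bound directly is delicate because $\mathrm{tr}\,G^\ps$ is only small in $L^1$, not in $L^\infty$; so instead I would split $\pol$ into the region where $\mathrm{tr}\,G^\ps \leq \delta$ and its complement $\Omega_\delta$, note that $|\Omega_\delta| \leq C(\ps-1)/\delta$ is small, and combine a Poincaré inequality on the bulk with the already-established lower bound $\int_\pol H^\ps(df,df)\,\vol \geq \int_\pol H_o(df,df)\,\vol \geq \lambda_1(g_{u_o})\int_\pol f^2\,\vol$ from the Lemma to control the contribution of the thin boundary collar. Choosing $\delta = \delta(\ps) \to 0$ slowly (e.g. $\delta = (\ps-1)^{1/2}$) makes both $\delta^{-1}$ blow up and $|\Omega_\delta| \to 0$, yielding $\lambda_1^T(\ps) \geq c\,\delta(\ps)^{-1} \to +\infty$.

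The main obstacle I anticipate is precisely this boundary analysis: near the facets of $\pol$ the Hessian $G^\ps$ blows up (the Guillemin potential is singular there), so $H^\ps$ collapses there, and one cannot hope for a uniform pointwise lower bound on $H^\ps(df,df)$. The technical heart is to quarantine this thin collar — showing its volume is $O(\ps-1)$ and that the Lemma's crude bound $\lambda_1(g_{u_o})$ already suffices to absorb whatever $f$ does there — while extracting the genuine blow-up $(\ps-1)^{-1/2}$ from the bulk where $G^\ps \to 0$. An alternative, possibly cleaner route is to bound $\lambda_1^T(\ps)$ below by $\big(\int_\pol \mathrm{tr}\, H^\ps\,\vol\big)^{-1}$ type quantities via a dual variational principle, but in any case one must confront the non-uniformity at $\del\pol$; that is where the real work lies.
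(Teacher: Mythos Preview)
There is a concrete error and a deeper structural gap.

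\textbf{The concrete error.} Your claim that $\int_\pol \frac{\vol}{L_k}$ converges is false. Near the facet $\{L_k=0\}$ one can take coordinates with $L_k=x_1$, and then $\int_0^\epsilon \frac{dx_1}{x_1}$ diverges; a $1/t$ singularity along a codimension-one set is not integrable. Consequently $\int_\pol \mathrm{tr}\,G^\ps\,\vol = +\infty$ for every $\ps>1$, and your Markov-type bound $|\Omega_\delta|\le C(\ps-1)/\delta$ is unjustified. (It happens that $|\Omega_\delta|\to 0$ can still be obtained by a direct layer-cake estimate, but not at the rate you claim.)

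\textbf{The structural gap.} Even granting $|\Omega_\delta|\to 0$, your scheme does not close. On the bulk $\pol\setminus\Omega_\delta$ you get $\int H^\ps(df,df)\ge \delta^{-1}\int |df|^2_{\mathrm{eucl}}$; combined with Poincar\'e on that region this controls $\int_{\pol\setminus\Omega_\delta}(f-\bar f)^2$, not $\int_\pol f^2$. If the test function concentrates its $L^2$ mass on the collar $\Omega_\delta$, the bulk estimate gives nothing, and on the collar the Lemma only supplies the \emph{fixed} constant $\lambda_1(g_{u_o})$, which cannot produce blow-up. To rule out such concentration you would need an a~priori $H^1(g_{u_o})$ bound on $f$ that is independent of $\ps$ --- but for an arbitrary competitor there is none. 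The only way to get that bound is via the Lemma applied to an actual eigenfunction, \emph{assuming} $\lambda_1^T(\ps)$ is bounded; at that point you are running a contradiction argument, not a direct one.

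\textbf{What the paper does.} This is exactly the route the paper takes: suppose $\lambda_1^T(\ps)\le\kappa$, take normalized eigenfunctions $f_\ps$, use the Lemma to bound $\|f_\ps\|_{H^1(g_{u_o})}$, and extract an $L^2$-limit $f$ with $\int_\pol f=0$, $\int_\pol f^2=1$. The key computation is that $(\ps-1)H^\ps\to A_x$ uniformly on every compact $K\subset\mathring\pol$ (this is your observation that $G^\ps=O(\ps-1)$ \emph{in the interior}, made precise). Hence $\int_K |df_\ps|^2\le C_K(\ps-1)\kappa\to 0$, and Poincar\'e on $K$ forces $f|_K$ constant. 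Letting $K\nearrow\mathring\pol$ gives $f$ constant on $\pol$, the desired contradiction. Note how the paper sidesteps the boundary entirely by working on interior compacta and using the soft $L^2$-compactness of the eigenfunctions; this is precisely the piece your direct argument is missing.
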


\begin{proof} 
Assume that $\lambda_1^T(\ps)$ is also bounded above by a constant, say $\kappa>0$, then, we can find a sequence $\ps_k \ra 1^+$ such that $\lambda_1^T(\ps_k)$ converges to some $\lambda >0$.

Consider a sequence $f_{\ps_k}\in C^{\infty}(M)^T=C^{\infty}(\pol)$ of eigenfunctions $$\Lap^{g_{u^{\ps_k}}}f_{\ps_k}= \lambda_1^T(\ps_k)f_{\ps_k}$$ normalized such that $\|f_{\ps_k}\|_{L^2}= \int_{\pol} (f_{\ps_k})^2 \vol = 1$. Note that the inequality~\eqref{1stOrderBound} implies that the Sobolev  norms of $\{f_{\ps_k}\}$ in $H^{1}(M, g_{u_o})$ are bounded above by $\kappa +1$. Indeed, combining the hypothesis and \eqref{1stOrderBound}, we have
$$\kappa> \lambda_1^T(\ps_k)= \int_\pol H^\ps(df_{\ps_k},df_{\ps_k}) \vol \geq \|\nabla_{g_{u_o}}f_{\ps_k}\|_{g_{u_o}}^2.$$

Consequently, there exists a subsequence, that we still index by $\ps$ for simplicity, of eigenfunctions $f_{\ps}\in C^{\infty}(M)^T$ converging in the $L^2(M,g_{u_o})$ topology to some function $f\in L^2(M)$. We have $\|f\|_{L^2}=1$, $\int_\pol f(x) \vol =0$. 

~\\

A straightforward calculation yields
$$G^\ps(x) = \frac{\ps-1}{2\ps} \sum_{k=1}^d \left(\frac{L_k(x) + \ps c_k}{L_k(x)L_k^\ps(x)} \right)\nu_k\otimes \nu_k,$$ and thus, for any $x\in\mathring{\pol}$, 
$$B_x := \, \lim_{\ps \ra 1^+}\frac{G^\ps(x)}{\ps-1} \, = \frac{1}{2} \sum_{k=1}^d \left(\frac{L_k(x) + c_k}{L_k(x)^2} \right)\nu_k\otimes \nu_k$$ is positive definite and depends smoothly on $x\in\mathring{\pol}$.  For $x\in\mathring{\pol}$, let $$A_x= \, \lim_{\ps \ra 1^+} (\ps -1)H^\ps(x)$$ be the inverse of $B_x$. Notice that $(\ps -1)H^\ps(x)$ converges to its limit uniformly on compact subsets in $\pol$.

Let $K$ be a compact subset of $\mathring{\pol}$. The integral $$\int_K H^\ps(df_{\ps},df_{\ps})\vol$$ can be written as
 \begin{equation}\label{boundonK} 
 \int_K A_x\left(\frac{df_{\ps}}{\sqrt{\ps-1}},\frac{df_{\ps}}{\sqrt{\ps-1}}\right) \vol+\int_K \left((s-1)H^\ps-A_x\right)\left(\frac{df_{\ps}}{\sqrt{\ps-1}},\frac{df_{\ps}}{\sqrt{\ps-1}}\right) \vol.
 \end{equation}
Now for any $\epsilon>0$ 
\begin{IEEEeqnarray*}{c}
\left| \int_K \left((s-1)H^\ps-A_x\right)\left(\frac{df_{\ps}}{\sqrt{\ps-1}},\frac{df_{\ps}}{\sqrt{\ps-1}}\right) \vol\right|\\
\leq \mbox{sup}_K ||(s-1)H^\ps-A_x|| \int_K \left|\frac{df_{\ps}}{\sqrt{\ps-1}}\right|^2 \vol\\
\leq \epsilon \int_K \left|\frac{df_{\ps}}{\sqrt{\ps-1}}\right|^2\vol
\end{IEEEeqnarray*}
when $s$ is sufficiently close to $1$. On $K$, the symmetric bilinear form $A$ is positive definite and its smallest eigenvalue is strictly positive. Hence, on $K$, the norm of $A_x$ is equivalent to the Euclidean norm i.e.    
$$\int_K A\left(\frac{df_{\ps}}{\sqrt{\ps-1}},\frac{df_{\ps}}{\sqrt{\ps-1}}\right)\vol\geq \Gamma_K \int_K \left|\frac{df_{\ps}}{\sqrt{\ps-1}}\right|^2\vol$$ for some constant $\Gamma_K$. 
The inequality 
$$\int_K H^\ps(df_{\ps},df_{\ps})\vol \leq \kappa,$$
implies 
$$
 \int_K A_x\left(\frac{df_{\ps}}{\sqrt{\ps-1}},\frac{df_{\ps}}{\sqrt{\ps-1}}\right) \vol+\int_K \left((s-1)H^\ps-A_x\right)\left(\frac{df_{\ps}}{\sqrt{\ps-1}},\frac{df_{\ps}}{\sqrt{\ps-1}}\right) \vol\leq \kappa,
 $$
 but
 \begin{IEEEeqnarray*}{c}
 \int_K A_x\left(\frac{df_{\ps}}{\sqrt{\ps-1}},\frac{df_{\ps}}{\sqrt{\ps-1}}\right) \vol+\int_K \left((s-1)H^\ps-A_x\right)\left(\frac{df_{\ps}}{\sqrt{\ps-1}},\frac{df_{\ps}}{\sqrt{\ps-1}}\right) \vol\\
 \geq (\Gamma_K-\epsilon) \int_K \left|\frac{df_{\ps}}{\sqrt{\ps-1}}\right|^2\vol
 \end{IEEEeqnarray*}
and we conclude that 
$$
\frac{\Gamma_K}{2} \int_K \left|\frac{df_{\ps}}{\sqrt{\ps-1}}\right|^2\vol\leq \kappa.
$$

Using the Poincar\'e inequality, there exists $C_K$, a constant depending only on $K$, such that $$C_K\int_K \left|\frac{df_{\ps}}{\sqrt{\ps-1}}\right|^2\vol \geq \frac{1}{\ps-1}\int_K \left(f_{\ps} - \frac{\int_Kf_\ps}{\int_K\vol}\right)^2\vol.$$ However, since $f_\ps \ra f$ in the $L^2$--topology on $\pol$  and on $K$, $$\int_K \left(f_{\ps}  - \frac{\int_Kf_\ps\vol}{\int_K\vol}\right)^2\vol\; \longrightarrow \; \int_K \left(f  - \frac{\int_Kf\vol}{\int_K\vol}\right)^2\vol.$$ 
But
$$ 
0\leq \int_K \left(f_{\ps}  - \frac{\int_Kf_\ps\vol}{\int_K\vol}\right)^2\vol \leq \frac{2(s-1)C_K\kappa}{\Gamma_K} \longrightarrow 0,
$$
{when $s\ra 1^+$}. This implies that 
$$
\int_K \left(f  - \frac{\int_Kf\vol}{\int_K\vol}\right)^2\vol=0,
$$ 
and $f$ is a constant on $K$. Since $K$ is arbitrary, $f$ is constant on $\pol$. But $\int_P f=0$ so that $f$ must be identically zero which contradicts $\int_P f^2=1$. 
\end{proof}
This proposition proves the second part of Theorem \ref{THEOminLAMBDA1}. We have thus proved Theorem \ref{THEOminLAMBDA1}.

\section{Bounds on $\lambda_1$ for toric manifolds}\label{bound_lambda_1}

\subsection{The Bourguignon--Li--Yau bound of an integral polytope}\label{BLYboundsub}
Consider a complex projective manifold $(M,J, L)$ where $(M,J)$ is complex manifold and $L\rightarrow M$ is a very ample line bundle giving a fixed embedding $\Phi: M\hookrightarrow \bC\bP^N\simeq \bP(H^0(M,L))^\ast$. In \cite{BLY}, Bourguignon--Li--Yau gave a bound on the first eigenvalue of any K\"ahler metric $\omega$, compatible with $J$. The bound depends only on the dimension of $M$, the K\"ahler class $[\omega]\in H^{2}(M,\bR)$ and the embedding class $[\Phi^*\omega_{FS}] = 2\pi c_1(L)$. The aim of this subsection is to discuss and review the result, as well as apply it to integral toric manifolds. We start by stating the main result of \cite{BLY}.
\begin{theorem}[Bourguignon--Li--Yau]\label{bly}
Let $M^n$ be a compact complex manifold and let $\Phi:M\rightarrow \bC\bP^N$ be a holomorphic immersion such that $\Phi(M)$ is not contained in any hyperplane in $\bC\bP^N$. Then for any K\"ahler metric $\omega$ on $M$, compatible with the given complex structure 
\begin{equation}\label{BOUNDbly}
 \lambda_1(M,\omega)\leq \frac{2n(N+1)\int_M\Phi^*\omega_{FS}\wedge \omega^{n-1}}{N\int_M \omega^n},
\end{equation} where $\omega_{FS} = {\bi}\partial \bar{\partial} \log (|Z_0|^2 + \dots + |Z_N|^2)$ is the Fubini-Study form on $\bC\bP^N$.
\end{theorem}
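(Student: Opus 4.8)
The plan is to run the classical argument of \cite{BLY}, whose engine is the variational (Rayleigh) characterisation of $\lambda_1$ together with a Hersch-type renormalisation of the embedding $\Phi$. For the Laplace--Beltrami operator $\Delta$ of the Riemannian metric underlying $\omega$ one has $\lambda_1(M,\omega)\int_M|f|^2\,dV\leq\int_M|\nabla f|^2\,dV$ for every (real or complex valued) $f$ with $\int_M f\,dV=0$, so it suffices to produce enough mean-zero test functions. Realise $\bC\bP^N$ inside the space of Hermitian $(N+1)\times(N+1)$ matrices via $[Z]\mapsto\pi(Z)=\frac{ZZ^*}{|Z|^2}$, the rank-one orthogonal projection onto the line $[Z]$; the extreme points of the convex hull of the image are exactly these projections, and the barycentre of the image under the $PU(N+1)$-invariant probability measure is $\frac{1}{N+1}\mathrm{Id}$. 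Pulling the matrix entries of $\pi$ back along $\Phi$ gives functions $u_{ij}=\left(\frac{Z_i\bar Z_j}{|Z|^2}\right)\circ\Phi$ on $M$, with the pointwise identities $\sum_i u_{ii}\equiv1$ and $\sum_{i,j}|u_{ij}|^2\equiv1$ (the second because $\mathrm{tr}(\pi^2)=\mathrm{tr}(\pi)=1$).

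The crux is the renormalisation. Since $\Phi$ is a holomorphic immersion of the compact $M$ with $\Phi(M)$ contained in no hyperplane, the push-forward probability measure $\mu=\Phi_*(dV)/\mathrm{Vol}(M)$ on $\bC\bP^N$ is not concentrated on any hyperplane, and the key lemma is: there exists $\gamma\in PGL(N+1,\bC)$ such that the barycentre of $\gamma_*\mu$, computed in the Hermitian-matrix model, equals $\frac{1}{N+1}\mathrm{Id}$. One proves this by the center-of-mass / conformal-volume technique: parametrise the symmetric space $PGL(N+1,\bC)/PU(N+1)$ by positive-definite Hermitian forms $A$ up to scale, attach to each the transformation $\gamma_A([Z])=[A^{1/2}Z]$ and the barycentre $b(A)=\int_{\bC\bP^N}\pi(A^{1/2}Z)\,d\mu([Z])$, a trace-one positive matrix lying in the simplex whose extreme points form $\bC\bP^N$; one shows that as $A$ degenerates towards the boundary, $b(A)$ tends to the boundary copy of $\bC\bP^N$ in that simplex in a degree-one fashion --- this is precisely where non-concentration on hyperplanes is used --- so a topological argument forces $b$ to attain the interior point $\frac{1}{N+1}\mathrm{Id}$. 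Replacing $\Phi$ by $\gamma\circ\Phi$ alters neither $\lambda_1$ nor the right-hand side of \eqref{BOUNDbly}: indeed $\gamma^*\omega_{FS}$ is a K\"ahler form in the class $[\omega_{FS}]$, hence $(\gamma\circ\Phi)^*\omega_{FS}=\Phi^*\omega_{FS}+\bi\partial\bar\partial(\psi\circ\Phi)$ for some smooth $\psi$ on $\bC\bP^N$, and wedging the exact term with the closed form $\omega^{n-1}$ and integrating gives $0$ by Stokes. Thus we may assume from the start that $\int_M u_{ij}\,dV=\frac{\delta_{ij}}{N+1}\mathrm{Vol}(M)$.

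For the final computation, set $f_{ij}=u_{ij}-\frac{\delta_{ij}}{N+1}$; these have zero mean by the previous step, so each is admissible in Rayleigh. From $\sum_i u_{ii}\equiv1$ and $\sum_{i,j}|u_{ij}|^2\equiv1$ one gets pointwise $\sum_{i,j}|f_{ij}|^2\equiv1-\frac{2}{N+1}+\frac{1}{N+1}=\frac{N}{N+1}$, hence $\sum_{i,j}\int_M|f_{ij}|^2\,dV=\frac{N}{N+1}\mathrm{Vol}(M)$, and summing Rayleigh over $i,j$ gives $\lambda_1(M,\omega)\frac{N}{N+1}\mathrm{Vol}(M)\leq\sum_{i,j}\int_M|\nabla u_{ij}|^2\,dV$. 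Holomorphy of $\Phi$ now enters through the identity ``energy density of a holomorphic map $=$ twice the $\omega$-trace of the pulled-back target K\"ahler form'', that is $\sum_{i,j}|\nabla u_{ij}|^2\equiv2\Lambda_\omega(\Phi^*\omega_{FS})$ pointwise, with $\Lambda_\omega$ the Lefschetz contraction (the universal constant $2$ is fixed by, e.g., the case $M=\bC\bP^n$, $\Phi=\mathrm{id}$). Combined with $\alpha\wedge\omega^{n-1}=\frac1n(\Lambda_\omega\alpha)\,\omega^n$ for a $(1,1)$-form $\alpha$ and $dV=\omega^n/n!$, this yields $\sum_{i,j}\int_M|\nabla u_{ij}|^2\,dV=2n\frac{\int_M\Phi^*\omega_{FS}\wedge\omega^{n-1}}{\int_M\omega^n}\mathrm{Vol}(M)$; substituting and cancelling $\mathrm{Vol}(M)$ gives \eqref{BOUNDbly}. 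The bookkeeping of the first and last steps (two pointwise algebraic identities and a K\"ahler-identity constant chase) is routine; the real difficulty, and the only place the hyperplane hypothesis is needed, is the renormalisation lemma, which rests on a topological degree argument on the symmetric space $PGL(N+1,\bC)/PU(N+1)$ rather than on a computation.
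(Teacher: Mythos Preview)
Your argument is correct and follows the same route as the paper's sketch: the Hersch-type renormalisation (the paper phrases it as finding a positive Hermitian $B\in SL(N+1,\bC)$ making $\Phi$ $\omega$-balanced, you phrase it as a degree argument on $PGL(N+1,\bC)/PU(N+1)$), followed by Rayleigh on the centred matrix entries $f_{ij}$ and the energy identity (the paper writes it as $\sum_{i,j}d\Psi_{ij}\wedge d^c\overline{\Psi}_{ij}=2\omega_{FS}$, you write it via $\Lambda_\omega$). The only cosmetic difference is your invocation of ``energy density of a holomorphic map'' for $\sum_{i,j}|\nabla u_{ij}|^2$: since $\pi\circ\Phi$ is not itself holomorphic, the cleanest justification is the paper's direct pointwise identity on $\bC\bP^N$, but you correctly note the constant can be pinned down by a single example, so the outcome is the same.
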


We say that an immersion $\Phi:M\rightarrow \bC\bP^N$ is {\it full}  if its image is not contained in any hyperplane of $\bC\bP^N$. Given a full holomorphic immersion, we set $$\bfB([\Phi],[\omega])=\frac{2n(N+1)\int_M\Phi^*\omega_{
FS}\wedge \omega^{n-1}}{N\int_M \omega^n}.$$  It is clear that $\bfB([\Phi],[\omega])$ only depends on the $H^{1,1}(M,\bR)$--cohomology classes $[\omega]$ and $[\Phi^*\omega_{FS}]$.\\

Arezzo--Ghigi--Loi generalized Theorem~\ref{bly} to provide a bound on the first eigenvalue of K\"ahler manifolds admitting a Gieseker stable bundle, see~\cite{AGL}. In \cite{leonidP}, Polterovich used the Bourguignon--Li--Yau Theorem to give a bound on $\lambda_1$ for all symplectic manifolds whose symplectic class $\frac{1}{2\pi}[\omega]$ lies in $H^2(M,\bQ)$. In the toric case, this theorem can be applied directly to provide (various) bounds on the first eigenvalue of compact toric K\"ahler manifolds. Indeed, given a toric compact K\"ahler manifold $(M,\omega, J,T)$ it is known, see for e.g. \cite{CLS:toricVarietiesBOOK,guillMET}, that $H^2_{dR}(M) = H^{1,1}_{\bar{\del}}(M)$. Hence, one can pick a symplectic form $\tilde{\omega}$, compatible with $J$ and lying in an integral and very ample class. Using Kodaira's embedding Theorem, we know that there exists a full embedding $\Phi: M\ra \bcp^N$ such that $[\Phi^*\omega_{FS}]=[\tilde{\omega}]$. Hence, by Theorem~\ref{bly} $\lambda_1(\omega, J)\leq \bfB(\Phi,[
\omega])$. The input of Theorem \ref{thm_bound_lambda_1} is to give a bound that depends only (and explicitly) on the polytope.

\begin{rem} Using Bourguignon--Li--Yau Theorem, we can get a finer  upper bound for $\lambda_1(M,\omega)$, namely 
$$ \inf\left\{\bfB([\Phi],[\omega])\, |\, \; \Phi : M\rightarrow \bC\bP^N \;\; \mbox{full holomorphic immersion}\right\}.$$ 
Many natural questions arise: given $\Omega =[\omega]$, is this infimum reached for some immersion ? If so, is this immersion minimal or balanced ?  Note that  in the Riemannian case, there is an Embedding Theorem due to Colin-de-Verdi\`ere and El Soufi--Ilias (see \cite{EI})  concerning $\lambda_1$--extremal metrics. These Riemannian metrics are essentially defined as critical points of the map $g\mapsto \lambda_1(g)$ on the space of Riemannian metrics with fixed total volume. In that case, the aforementioned authors showed that an orthonormal basis of the first eigenspace provides a {\it minimal} embedding into a sphere $S^N$ such that the standard round metric on $S^N$ pulls-back to the extremal one.
\end{rem}

 \begin{definition}
Given a compact symplectic toric manifold $(M,\omega, T)$ with moment polytope $\pol$ we say that $\pol \subset \kt^*$ is {\it integral} if  its vertices lie in the dual of the lattice $\Lambda\subset  \kt$ of circle subgroups of the torus $T$.
\end{definition}

It is well known that integral polytopes correspond to symplectic toric manifolds whose cohomology class is integral. Moreover, any such toric manifold admits a compatible toric K\"ahler structure \cite{delzant:corres,guillMET} and an explicit equivariant holomorphic full embedding into some projective space, see for e.g. \cite[Theorem 6.1.5]{CLS:toricVarietiesBOOK}. We will apply Bourguignon--Li--Yau Theorem using this embedding. {For the sake of completeness} we recall {some facts about} this embedding.

Any symplectic toric manifold admits compatible complex structures (see \cite{delzant:corres,guillMET}). {An example of such a complex structure is given by the Guillemin metric} $g=g_{u_o}$ corresponding to the Guillemin potential $u_o$ {(see Definition~\ref{guilleminPOTENTIAL})}. Moreover, any two such compatible complex structures are biholomorphic (see Remark~\ref{biholomBETWEENstruct} below). 

We start with a smooth K\"ahler toric manifold $(M,g,\omega, J)$ whose cohomology class $[\omega]$ is integral and {such} that $g=g_u$ for some symplectic potential $u\in \mS(P,\nu)$. On the underlying toric variety $(M,J)$ the class $[\omega]$ corresponds to an ample divisor which is then very ample  \cite[Theorem 6.1.5]{CLS:toricVarietiesBOOK}.    
{More} precisely, $(M,J)$ carries a holomorphic line bundle $L$ whose first Chern class is $[\omega]$ and {which} defines a full holomorphic embedding $\Phi_{u} : M\hookrightarrow \bcp^N$ where $N+1$ is the number of lattice points in $\pol$. The embedding is associated to a basis of $H^0(L)$ namely $\{e^{mz}\mathbb{1}, m\in \Lambda^*\cap \pol\}$ where $\mathbb{1}$ is a reference holomorphic section of $L$ and may be defined by  
 \begin{equation} \label{toricEMBEDDING_holo}                                                                                                                                                                                                                                                                    
\Phi_u(z) =  [e^{\w_0 \cdot z}:\cdots :e^{\w_N \cdot z}],                                                                                                                                                                                                                                                                             
\end{equation}
where $\w_0,\cdots,\w_N$ are the lattice points in $\pol$ and $z=y+\bi\theta$ are local holomorphic coordinates on $\mathring{M}$, see Remark~\ref{remCPLXcoor}.  

One can express such an embedding in action-angle coordinates via $z=y+\bi\theta = {\frac{\partial u}{\partial x}}+\bi \theta$ and for convenience we denote each coordinate 
 \begin{equation} \label{toricEMBEDDING0}                                                                                                                                                                                                                                                                         
{ \Phi_u(x,\theta) =  [e^{\w_0\cdot {\frac{\partial u}{\partial x}}}e^{\w_0\cdot \bi \theta}: \dots :e^{\w_N\cdot {\frac{\partial u}{\partial x}}}e^{\w_N\cdot \bi \theta}], }                                                                                                                                                                                                                                                                           \end{equation} because  $e^{\w_0\cdot z}=  e^{\w_0\cdot {\frac{\partial u}{\partial x}}}e^{\w_0\cdot \bi \theta}$.

\begin{rem}\label{biholomBETWEENstruct} It is known, see for example~\cite{don:Large}, that for two distinct symplectic potentials $u, u_o\in \mS(P,\nu)$ the map $$\gamma_{u,u_o} : P\times T\longrightarrow P\times T $$ defined by $ \gamma_{u,u_o}(x,e^{\bi\theta }) = (\left({\frac{\partial u_o}{\partial x}}\right)^{-1} {\frac{\partial u}{\partial x}}, e^{\bi\theta })$ extends as an equivariant diffeomorphism on $M$ sending $J_u$ to $J_{u_o}$ and $\gamma_{u,u_o}^*\omega = \omega + dd^ch$ where $h\in C^{\infty}(M)^T$.  
\end{rem} 
 \begin{rem}\label{rem_INDEPclass} It has been proved by Guillemin in~\cite{guillMET} that $[\Phi_{u_o}^*\omega_{FS}] = [\omega]$. Moreover, considering the diffeomorphism $\gamma_{u,u_o}\in \mbox{Diffeo}(M)$ of Remark~\ref{biholomBETWEENstruct}, we have $$\gamma_{u,u_o}^* \Phi_{u_o} = \Phi_{u}$$ {Since the space of symplectic potentials is convex (i.e $u_t = tu +(1-t)u_o \in \mS(P,\nu)$ for $t\in [0,1]$) the map $\gamma_{u,u_o}$ lies in the connected component of the identity in $\mbox{Diffeo}(M)$}. In particular, it preserves cohomology classes and $[\Phi_{u}^*\omega_{FS}] = [\omega]$. \end{rem}

\begin{rem}\label{embTORUS}
Together with $\Phi_u$ comes an embedding $\phi : T\hookrightarrow \bT^{N+1}$ induced by the linear map $\phi_* : \kt \ra \bR^{N+1}$, taking $\theta\in \kt$ to $$(\theta\cdot \w_0,\dots, \theta\cdot \w_N) \in \bR^{N+1}$$ which is clearly injective. The maps $\Phi_u$ are $\phi$--equivariant embeddings. 
\end{rem}

{It follows from Remark~\ref{rem_INDEPclass}, that the class of $[\Phi_{u}^*\omega_{FS}] $ does not depend on the chosen symplectic potential $u\in\mS(P,\nu)$. It also follows that for any $u\in\mS(P,\nu)$, $\Phi_u$ is a full {holomorphic embedding} iff $\Phi_{u_o}$ is.}

{Let us consider the Guillemin potential $u_o$. 
\begin{proposition}
Given a symplectic toric manifold, the map  $\Phi_{u_0}$ above is a well defined full holomorphic embedding.
\end{proposition}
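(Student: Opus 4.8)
The plan is to analyze $\Phi_{u_o}$ separately on the open dense stratum $\mathring M$ and in a neighbourhood of each toric stratum, using the observation that the neighbourhoods of the $T$-fixed points alone suffice to cover the toric boundary since every face of $\pol$ contains a vertex. On $\mathring M$ everything is transparent: by Remark~\ref{remCPLXcoor} the functions $z=\partial u_o/\partial x+\bi\theta$ are holomorphic coordinates for $J_{u_o}$, each component $e^{\w_k\cdot z}$ is a nowhere-vanishing holomorphic function, hence \eqref{toricEMBEDDING_holo} is a well-defined holomorphic map $\mathring M\to\bC\bP^N$; it is injective and an immersion there because it determines all ratios $e^{(\w_k-\w_l)\cdot z}$ and the differences $\w_k-\w_l$ include a $\bZ$-basis of $\Lambda^*$ (at a vertex $v$ the lattice points $v,v+e^1,\dots,v+e^n$ lie in $\pol$, the $e^j$ being the primitive edge vectors, which have lattice length at least one), so one recovers $z$ modulo $2\pi\bi\Lambda$, i.e.\ the point of $\mathring M$. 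Fullness is also visible here: a relation $\sum_k c_k e^{\w_k\cdot z}\equiv 0$ among exponentials with pairwise distinct characters forces $c_k=0$; equivalently the $e^{\w_k\cdot z}$ form the monomial basis of $H^0(M,L)$, which has dimension $N+1=\sharp(\pol\cap\Lambda^*)$ by \cite[Theorem 6.1.5]{CLS:toricVarietiesBOOK}, and a map attached to a basis of sections is non-degenerate.

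The substance of the proof is extending this across the boundary. I would fix a vertex $v$ of $\pol$; since $\pol$ is integral, $v\in\Lambda^*$, and since the polytope is Delzant the facet normals $\nu_{i_1},\dots,\nu_{i_n}$ through $v$ form a $\bZ$-basis of $\Lambda$ with dual basis $e^1,\dots,e^n\in\Lambda^*$, so near $v$ one has $\pol=\{v+\sum_j s_je^j:s_j\ge0\}$ with $s_j=L_{i_j}$, and every lattice point satisfies $\w_k=v+\sum_j a_{kj}e^j$ with $a_{kj}\in\bZ_{\ge0}$. Using the explicit Guillemin potential one has $\partial u_o/\partial x=\tfrac12\sum_{\ell=1}^d\nu_\ell\log L_\ell$, which I would substitute into $e^{\w_k\cdot z}$ and compare with the standard holomorphic Guillemin--Delzant coordinates $w=(w_1,\dots,w_n)$ centred at the fixed point over $v$ — those in which the residual $T$-action is linear and $\{w_j=0\}$ is the divisor over $F_{i_j}$. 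The expected outcome is
\[
e^{\w_k\cdot z}=w^{a_k}\,g_k(w),\qquad w^{a_k}:=\prod_{j} w_j^{a_{kj}},
\]
with each $g_k$ holomorphic and nowhere zero near $w=0$, all the $g_k$ differing only by nonzero constants (they are $T$-semi-invariant of the same weight, that of the section attached to $v$).

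Granting this, the proposition drops out. One of the $\w_k$ equals $v$, so the component $w^{0}g_{k_0}(w)$ is nowhere zero near the fixed point; dividing through by it places $\Phi_{u_o}$ in the affine chart of $\bC\bP^N$ indexed by $k_0$, where it reads $w\mapsto\big(w^{a_k}g_k(w)/g_{k_0}(w)\big)_{k\ne k_0}$, a holomorphic map defined on the whole chart and so extending over the fixed point; since such vertex charts cover $M$, $\Phi_{u_o}$ is a globally defined holomorphic map. Moreover, because the edges at $v$ have lattice length at least one, the standard basis vectors occur among the $a_k$, so the coordinate functions $w_1,\dots,w_n$ (up to nonvanishing factors) are among the components of the chart expression; it therefore has a holomorphic left inverse and is a closed embedding of that chart, and combining this with the $\mathring M$ analysis gives injectivity and the immersion property on all of $M$. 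The main obstacle is precisely this boundary step — writing down the Guillemin--Delzant holomorphic charts and verifying the monomial form of $\Phi_{u_o}$ there; the rest is the combinatorics of lattice points near a vertex together with linear independence of characters. A shorter but more ``imported'' route would invoke the Delzant correspondence with \cite[Theorem 6.1.5]{CLS:toricVarietiesBOOK}: $(M,J_{u_o})$ is the smooth projective toric variety $X_\pol$, the integral polytope corresponds to a very ample divisor whose sections have monomial basis $\{e^{\w_k\cdot z}\}$, the morphism attached to a basis of sections of a very ample bundle is by definition a non-degenerate closed embedding, and one then only checks that in the coordinates $z$ this morphism is literally \eqref{toricEMBEDDING_holo}.
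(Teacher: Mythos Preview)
Your approach is correct and essentially parallels the paper's: both substitute the Guillemin potential explicitly, reduce the boundary analysis to a vertex where the Delzant condition makes $\pol$ standard, and verify injectivity and the immersion property there via the map $z\mapsto(e^{z_1},\dots,e^{z_n})$. The main presentational difference is that you package the boundary extension in terms of holomorphic Guillemin--Delzant charts $w$ and the monomial form $e^{\w_k\cdot z}=w^{a_k}g_k(w)$, whereas the paper stays in action--angle coordinates, multiplies all homogeneous components by $\prod_i L_i(x)^{c_i/2}$, and checks well-definedness on $\partial\pol$ directly by verifying invariance under the stabiliser directions $\theta\mapsto\theta+\sum_l\alpha_l\nu_l$; both routes come to the same thing. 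One point in your favour: you address \emph{fullness} explicitly (via linear independence of the characters $e^{\w_k\cdot z}$, or equivalently by identifying them with the monomial basis of $H^0(M,L)$), which the paper's proof body does not actually verify despite the statement.
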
}
{This is a well known fact. We write the details down here for the reader's convenience. See \cite{guillMET} and {\cite{CLS:toricVarietiesBOOK}} for more on this.}
\begin{proof}
Since $u_o= \frac{1}{2}\sum_{k=1}^d (L_k \log L_k -L_k)$, then substituting $$y= {\frac{\partial u_o}{\partial x}} = \frac{1}{2} \sum_{k=1}^d(\log L_k(x)) \nu_k, $$ we get that $$\Phi_{u_0}(x,\theta) = [\Pi_{i=1}^d L_k(x)^{\frac{\w_0\cdot \nu_k}{2}}e^{\bi\w_0\cdot \theta } :\cdots : \Pi_{i=1}^d L_k(x)^{\frac{\w_N\cdot \nu_k}{2}}e^{\bi\w_N\cdot \theta }].$$ 
{The homogeneous coordinates in} this last expression are not smooth on $M$ because $y={\frac{\partial u}{\partial x}}$ blows up when $x$ approaches the boundary and $\theta$ is {only} well-defined modulo a lattice. To see it is globally defined on $M$, we multiply each {homogeneous} coordinate of $\Phi_{u_o}$ by the function $\Pi_{i=1}^d L_i(x)^{c_i/2}$ where $L_i(x)= x\cdot\nu_i +c_i$ and we get  
\begin{equation}\label{toricEMBEDDING_aa}
 \Phi_{u_o}(x,\theta)=\left[\left(\Pi_{i=1}^d (L_i(x))^{\frac{L_i(m)}{2}}\right)e^{\bi m\cdot \theta}\right]_{m\in \pol\cap \bZ^n},
\end{equation} where we identify $\kt\simeq \bR^n$ and $\Lambda^* \simeq \bZ^n$.\\

$\bullet$ The homogeneous coordinates of $\Phi_{u_o}$ do not vanish simultaneously. It is clear that the homogeneous coordinates of $\Phi_{u_o}$ do not vanish over the interior of $\pol$. Let $x\in \partial \pol$.  Without loss of generality assume that $L_1(x)=\cdots=L_r(x)=0$ and suppose that $x$ is in the interior of the face $F$ defined by $L_1(x)=\cdots=L_r(x)=0$. Because $P$ is integral there is also a point $m \in F \cap \bZ^n$ so that $L_1(m)=\cdots=L_r(m)=0$ and the $m$-th coordinate does not vanish. \\

$\bullet$ $\Phi_{u_o}$ is globally defined on $M$. The application $\theta : \mathring{M} \ra \bR^n/\bZ^n$ is well defined on the interior of $P$ but not on the boundary. Let $x\in \partial \pol$ say $L_1(x)=\cdots=L_r(x)=0$. Then, we should check that
$$
\Phi_{u_o}(x,\theta)=\Phi_{u_o}\left(x,\theta+ \sum_{l=1}^r\alpha_l\nu_l\right),
$$ 
because the Lie algebra of the subgroup of $\bT^n$ that fixes the points in $x^{-1}(F)$ is spanned by $\nu_1,\cdots \nu_r$. The only homogeneous coordinates that do not vanish at $x$ are those corresponding to $m$'s such that $L_i(m)=0$ for $i=1,\cdots, r$. Let $m_a$ be such a point. Then $\sum_{l=1}^r\alpha_l\nu_l = \sum_{l=1}^r\alpha_l (-c_l)$ because $m_a\cdot \nu_l=-c_l$ is independent of $m_a$, that is each component of $\Phi_{u_o}(x,\theta+ \sum_{l=1}^r\alpha_l\nu_l)$ is $\left(e^{\frac{\bi}{2}\sum_{l=1}^r\alpha_l (-c_l)}\right)$ times the corresponding component of $\Phi_{u_o}(x,\theta)$ and thus $\Phi_{u_o}(x,\theta+ \sum_{l=1}^r\alpha_l\nu_l)=\Phi_{u_o}(x,\theta)$.\\

$\bullet$ $\Phi_{u_o}$ is holomorphic because on the interior of $\pol$ it coincides with
 $$
 [e^{m\cdot z}]_{m\in \pol\cap \bZ^n},
 $$
 which is expressed in terms of complex coordinates as a holomorphic function. \\

$\bullet$  $\Phi_{u_o}$ {is an injective immersion}. It is clear over the interior of $P$ thanks to Remark \ref{embTORUS}. Over the boundary, we may assume that $\pol$ is standard around one of its vertices so that $L_1=x_1-a_1,\cdots, L_n=x_n-a_n$. In this case by reordering if necessary we may assume that $m_1-m_0=(1,0,\cdots,0), \cdots, m_n-m_0=(0,\cdots,0,1)$. To prove injectivity of $\Phi_{u_o}$ and its derivative it is enough to prove injectivity of 
$$
z\rightarrow (e^{(m_1-m_0)z},\cdots, e^{(m_n-m_0)z}).
$$
But the above is simply 
$$
z\rightarrow (e^{z_1},\cdots, e^{z_n})
$$ which is injective modulo $2\pi \bi \bZ^n$ as expected and has injective derivative. 
\end{proof}

 Applying the Bourguignon--Li--Yau theorem to $(M^{2n},\omega, g_u,J, T)$, we get that 
  \begin{equation}\label{BLYboundTORIC}
   \lambda_1(\omega,g_u) \leq \frac{{2}n(N+1)}{N} \int_M \frac{\Phi_{u}^*\omega_{FS}\wedge \omega^{n-1}}{\omega^n} = \frac{2n(N+1)}{N}
  \end{equation} where $N+1$ is the number of lattice points in $\pol$. 

  \begin{rem}\label{rem:optimalCALSS}
Observe that taking $k\pol$ for $k\in \bN^*$ and $k\geq 2$ implies that the right hand side of~\eqref{BLYboundTORIC} decreases to $2n$. However, the left hand side decreases quickly as well since $\lambda_1(k\omega,kg_u) = \frac{1}{k}\lambda_1(\omega,g_u)$. Hence, in each rays of K\"ahler cone in $H^{1,1}(M,\bZ)$ there is an optimal class, the primitive class, on which we may apply the bound $\bfB(\Phi_u,[\omega_\pol])$. 
 \end{rem}
\begin{rem}\label{integralBOUND} The Bourguignon--Li--Yau bound is an integer if and only if $N=2$, $N=n$ or $N=2{n}$. The two first cases imply $M$ is a projective space and the last one gives $\lambda_1(\omega,g_u) \leq 2n+1$. Note that, in this last case, the first eigenvalue of a K\"ahler--Einstein metric, which is $2\lambda$ by Proposition~\ref{propKEeigenfunction} where $\lambda = 2\pi c_1(M)/[\omega]$, cannot reach this bound whenever $[\omega]$ is integral. \end{rem}

\subsection{A bound on $\lambda_1$ for toric manifolds}\label{sectBoundtoric}
Let $(M^{2n},\omega, g,J)$ be a compact toric K\"ahler manifold. The cohomology class of $\frac{\omega}{2\pi}$ is integral if and only if $\pol$ is integral. In this subsection we will not assume that $\pol$ is integral. We start by defining an integer $k_0(\pol)$ associated to $\pol$. Let $k$ be a fixed integer. Consider the lattice $\bZ^n/k\cap \pol$.

\begin{definition}
Let $\pol$ be a Delzant polytope. Set
$$
\Lm_{i,k}=\text{min}\{L_i(m), \, m\in \pol \cap \bZ^n/k \}.
$$
\end{definition}
 Let $\pol_k$ be the polytope defined by the inequalities $L_i(x)\geq \Lm_{i,k}$ i.e. 
$$
\pol_k=\{x\in \pol: L_i\geq \Lm_{i,k}, \, i=1, \cdots d \}.
$$
Note that if $\pol_k$ is a non-empty polytope with $d$ facets then these facets are parallel to those of $P$.  

We want to show that as $k$ tends to infinity the lattice $\bZ^n/k\cap \pol$ becomes finer and eventually $\pol_k$ will look combinatorially like $\pol$.
\begin{lemma}\label{p_k_and_p}
Let $\pol$ be a simple compact Delzant polytope 
$$
\pol=\{x\in \bR^n: L_i\geq 0, \, i=1, \cdots d \}.
$$
Let $\Lm_{i,k}$ and $\pol_k$ be defined as above then $\Lm_{i,k}\rightarrow 0$ as $k$ tends to infinity and $\pol_k$ has the same combinatorial type as $\pol$ for $k$ large enough.
\end{lemma}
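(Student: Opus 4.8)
The plan is to establish the two assertions separately, both by elementary density arguments on the polytope.

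For the first claim, $\Lm_{i,k}\to 0$: by definition $\Lm_{i,k}=\min\{L_i(m)\,|\,m\in\pol\cap\tfrac{1}{k}\bZ^n\}\ge 0$, and it is nonincreasing as a function over the finer and finer lattices only in the sense that more points become available; the essential point is that any facet $F_i=\{L_i=0\}$ of $\pol$ has nonempty interior, so pick an interior point $p$ of $F_i$ (a point of $F_i$ not on any lower-dimensional face). Since $\pol$ is a polytope, a small neighborhood of $p$ in the affine hyperplane $\{L_i=0\}$ lies in $F_i\subset\pol$. Rational points are dense in that hyperplane, so for every $\varepsilon>0$ there is a rational point $q$ in that neighborhood with $L_i(q)=0$, and $q\in\tfrac1k\bZ^n$ for some $k$; for all multiples of that $k$ we get $\Lm_{i,k}=0$. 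More carefully, to get a single sequence $k\to\infty$ along which all $\Lm_{i,k}$ go to zero, I would instead argue quantitatively: $L_i$ is affine with gradient $\nu_i$, so for any point $x\in\pol$ and any lattice point $m\in\tfrac1k\bZ^n$ within distance $\delta$ of $x$ we have $|L_i(m)-L_i(x)|\le |\nu_i|\,\delta$; choosing $x$ on the facet $F_i$ (where $L_i(x)=0$) and using that $\tfrac1k\bZ^n$ is $\tfrac{\sqrt n}{k}$-dense in $\bR^n$, there is $m\in\pol\cap\tfrac1k\bZ^n$ with $L_i(m)\le |\nu_i|\tfrac{\sqrt n}{k}$ — provided such an $m$ actually lies in $\pol$, which holds once $k$ is large because $F_i$ has interior in its hyperplane and $\pol$ has nonempty interior, so a lattice point near an interior point of $F_i$, pushed slightly inward, lies in $\pol$. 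Hence $\Lm_{i,k}=O(1/k)\to 0$.

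For the second claim, that $\pol_k$ has the same combinatorial type as $\pol$ for $k$ large: since $\Lm_{i,k}\to 0$ for every $i$, the polytope $\pol_k=\{L_i\ge\Lm_{i,k}\}$ is a small perturbation of $\pol=\{L_i\ge 0\}$ obtained by translating each facet hyperplane inward by $\Lm_{i,k}/|\nu_i|$. The combinatorial type of a simple polytope is stable under sufficiently small perturbations of the facet-defining constants: for each vertex $v$ of $\pol$, exactly $n$ of the functions $L_i$ vanish and their gradients $\{\nu_i\}$ form a basis (Delzant), so the linear system $L_{i_1}(x)=\Lm_{i_1,k},\dots,L_{i_n}(x)=\Lm_{i_n,k}$ has a unique solution $v_k$ depending continuously on the $\Lm_{i,k}$, with $v_k\to v$; and since $v$ satisfies $L_j(v)>0$ strictly for the remaining facets, $v_k$ satisfies $L_j(v_k)>\Lm_{j,k}$ for $k$ large. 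Conversely, any vertex of $\pol_k$ arises this way. Thus for $k$ large the vertices of $\pol_k$ are in natural bijection with those of $\pol$, with the same incidence pattern of facets, i.e. the same combinatorial type; one should also note $\pol_k$ is nonempty for $k$ large since it contains each $v_k$, and that its $d$ facets are parallel to those of $\pol$ by construction.

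The main obstacle is the bookkeeping in the first step: one must be sure that the near-facet lattice points used to bound $\Lm_{i,k}$ genuinely lie in $\pol$, which requires a compactness/interior argument on each facet rather than a naive "rationals are dense" statement — a lattice point close to the hyperplane $\{L_i=0\}$ need not lie in $\pol$ if it drifts past a neighboring facet. This is handled by working near a relative-interior point of each facet and pushing slightly into $\mathring\pol$, but it is the only place where genuine care is needed; the stability of combinatorial type in the second step is then a routine consequence of $\Lm_{i,k}\to 0$ together with the Delzant (hence simple) condition.
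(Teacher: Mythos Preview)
Your proposal is correct and the overall strategy matches the paper's, but the execution of the first claim differs in one useful way. The paper works at a \emph{vertex} $a$ of $\pol$ lying on the facet $F_i$: using the Delzant condition it finds $A\in SL(n,\bZ)$ making $A\pol$ standard at $\tilde a=Aa$, so that near $\tilde a$ the polytope is cut out by $\{x_j\ge \tilde a_j,\ j=1,\dots,n\}$ together with the remaining inequalities $\tilde L_j\ge 0$, $j>n$; rounding each coordinate of $\tilde a$ \emph{up} to the nearest multiple of $1/k$ gives a point $a_k/k$ which automatically satisfies the first $n$ inequalities, and the remaining ones for $k$ large by continuity since $\tilde L_j(\tilde a)>0$. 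As $A^{-1}$ preserves $\tfrac1k\bZ^n$, this yields a point of $\pol\cap\tfrac1k\bZ^n$ within $O(1/k)$ of $a$, hence with $L_i$-value $O(1/k)$. This use of the Delzant structure is exactly what cleanly resolves the obstacle you flag: the ``round up'' step guarantees membership in $\pol$ with no separate inward push. By contrast, your first attempt --- finding a rational point \emph{on} the hyperplane $\{L_i=0\}$ --- actually fails when $c_i\notin\bQ$ (the normals $\nu_i$ are integral but the constants need not be, so that hyperplane may contain no rational points at all); you rightly abandon it for the quantitative argument, though that argument should be phrased as ``approximate a point already slightly inside $\mathring\pol$ by a lattice point'' rather than ``approximate a facet point and then push the lattice point inward,'' since the latter does not produce a lattice point. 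The paper does not spell out the combinatorial-stability half of the lemma at all; your explicit vertex-perturbation argument for it is more complete than what the paper provides.
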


\begin{proof} 
{Assume without loss of generality that $i=1$ i.e. we want to prove that $\Lm_{1,k}\rightarrow 0$ as $k$ tends to infinity. Choose a vertex of $\pol$ say $a=(a_1,\cdots,a_n)$ on the first facet of $\pol$ that is $L_1(a)=0$. Assume furthermore that the first $n$ facets of $\pol$ meet at $a$. This is possible as we can simply relabel the facets.}. Since $\pol$ is Delzant then there is $A\in SL(n,\bZ)$ such that $A\pol$ is standard at $a$. That is 
$$
A\pol=\{x\in \bR^n: x_1-\tilde{a}_1\geq 0, \cdots,  x_n-\tilde{a}_n\geq 0, \tilde{L}_{n+1}(x)\geq 0,\cdots \tilde{L}_d(x)\geq 0\},
$$
where $\tilde{L}_{n+1},\cdots, \tilde{L}_d$ are affine functions and $\tilde{a}=(\tilde{a}_1, \cdots, \tilde{a}_n)=Aa$. Note that $\tilde{L_i}(\tilde{a})>0$ for all $i=n+1,\cdots, d$ because {$\{\tilde{a}\}$} is the intersection of the first $n$ facets of $A\pol$. Pick $a_k=(a_{1,k},\cdots,a_{n,k})\in \bZ^n$ in the following way
$$
\frac{a_{i,k}-1}{k}\leq \tilde{a}_i\leq \frac{a_{i,k}}{k},\, \forall i=1,\cdots, n.
$$
Now $a_k/k$ satisfies the first $n$ inequalities in the definition of $A\pol$ by construction. On the other hand $|a_k/k-\tilde{a}|\leq n/k$ so that when $k$ is sufficiently large {and $i>n$}, $\tilde{L}_i(a_k)$ is close to $\tilde{L_i}(\tilde{a})>0$ hence $\tilde{L}_i(a_k)>0$ for large enough $k$. This implies $a_k/k\in A\pol$ therefore $a_k/k\in \bZ^n/k\cap A\pol$. But $A^{-1}a_k/k\in \bZ^n/k\cap \pol$ where we have used the fact that $A\in SL(n,\bZ)$. Also $|{A^{-1}a_k/k}-a|\leq||A^{-1}||n/k$ and therefore tends to zero as $k$ tends to infinity. It follows that {$L_1({\frac{A^{-1}a_k}{k}})\rightarrow L_1(a)=0$ because $L_1$} is a continuous function. Since {$\Lm_{1,k}\leq L_1({\frac{A^{-1}a_k}{k}})$} and is positive, we prove our claim.

\end{proof}

\begin{definition}
We define $k_0(\pol)$ to be smallest integer $k\geq 1$ such that $\pol_k$ has the same combinatorial type as $\pol$. 
\end{definition}
For any integer $k$, we set $N_k=\sharp(\bZ^n/k\cap \pol)-1$. Note that if $\pol$ is integral, $k_0(\pol)=1$. {Recall that we write $L_i(x)=x\cdot\nu_i+c_i$.}

\begin{lemma} Let $\pol$ be a Delzant polytope and $k{\geq} k_0(\pol)$ then $k\pol_k$ is an integral Delzant polytope such that $N_k +1 = \sharp((k\pol_k) \cap \bZ^n)$. 
\end{lemma}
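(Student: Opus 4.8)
The plan is to verify the three claimed properties of $k\pol_k$ in turn: that it is integral, that it is Delzant, and that it contains exactly $N_k+1$ lattice points. For integrality, recall $\pol_k = \{x\in\pol : L_i(x)\geq \Lm_{i,k},\ i=1,\dots,d\}$ where $\Lm_{i,k} = \min\{L_i(m) : m\in\pol\cap\bZ^n/k\}$. Scaling by $k$, the facet of $k\pol_k$ associated with the $i$-th label sits on $\{x : \langle x,\nu_i\rangle + kc_i = k\Lm_{i,k}\}$, i.e. on $L_i^{(k)}(x) := \langle x,\nu_i\rangle + k(c_i - \Lm_{i,k}) = 0$. Since $\nu_i\in\bZ^n$ and $kc_i$, $k\Lm_{i,k}$ are of the form $\langle k m,\nu_i\rangle + kc_i$ for a lattice point $m\in\bZ^n/k$ realizing the minimum, we get $k(c_i-\Lm_{i,k}) \in \bZ$, so each defining function of $k\pol_k$ is integral affine. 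A vertex of $k\pol_k$ is (by the combinatorial-type hypothesis $k\geq k_0(\pol)$) the intersection of $n$ such facets meeting exactly as in $\pol$; because $\pol$ is Delzant the corresponding normals $\nu_{i_1},\dots,\nu_{i_n}$ form a $\bZ$-basis of $\bZ^n$, and solving the integral affine system with a unimodular coefficient matrix yields an integer solution. Hence every vertex of $k\pol_k$ lies in $\bZ^n$, so $k\pol_k$ is an integral Delzant polytope (it is simple and has the same normals as $\pol$, so the Delzant basis condition is inherited verbatim).

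For the lattice-point count, I would show the dilation-by-$k$ map sends $\pol_k\cap(\bZ^n/k)$ bijectively onto $(k\pol_k)\cap\bZ^n$ — this is immediate since $m\mapsto km$ is a bijection $\bZ^n/k\to\bZ^n$ carrying $\pol_k$ to $k\pol_k$ — and then prove the key identity $\pol_k\cap(\bZ^n/k) = \pol\cap(\bZ^n/k)$. The inclusion $\pol_k\cap(\bZ^n/k)\subseteq\pol\cap(\bZ^n/k)$ is trivial since $\pol_k\subseteq\pol$. Conversely, if $m\in\pol\cap(\bZ^n/k)$ then for each $i$ we have $L_i(m)\geq \Lm_{i,k}$ by the very definition of $\Lm_{i,k}$ as a minimum over exactly this set, so $m\in\pol_k$. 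Therefore $\sharp((k\pol_k)\cap\bZ^n) = \sharp(\pol_k\cap(\bZ^n/k)) = \sharp(\pol\cap(\bZ^n/k)) = N_k+1$, as claimed.

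The only point needing genuine care — and the step I expect to be the main obstacle — is checking that $k\pol_k$ really is a polytope with the same $d$ facets (none collapsing or becoming redundant) and that its vertices are genuinely lattice points rather than merely lying on integral hyperplanes; this is exactly where the hypothesis $k\geq k_0(\pol)$, i.e. that $\pol_k$ has the same combinatorial type as $\pol$ (Lemma~\ref{p_k_and_p} and the subsequent definition), does the work, ensuring the incidence structure of facets is preserved so that the Delzant basis argument applies at each vertex. Everything else is a direct unwinding of definitions.
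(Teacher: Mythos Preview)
Your proposal is correct and follows essentially the same route as the paper: write the defining functions of $k\pol_k$ as $\langle x,\nu_i\rangle + k(c_i-\Lm_{i,k})$, observe that $k(c_i-\Lm_{i,k}) = -\langle km,\nu_i\rangle\in\bZ$ for a minimizer $m\in\pol\cap\bZ^n/k$, invoke $k\geq k_0(\pol)$ to inherit the Delzant combinatorics, and establish the lattice-point count via the scaling bijection together with $\pol_k\cap(\bZ^n/k)=\pol\cap(\bZ^n/k)$ from the minimality defining $\Lm_{i,k}$. The only difference is cosmetic: you spell out explicitly why integer constant terms plus unimodular normals force integer vertices, whereas the paper leaves this implicit.
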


\begin{proof} For $i\in\{1,\dots, d\}$, we denote $\tilde{c}_{i,k}= c_i-\Lm_{i,k}$ so that $\pol_k= \{x\in\bR^n\,|\, x\cdot\nu_i + \tilde{c}_{i,k} \geq 0, \, \mbox{ for } i=1,\dots, d\}$. Then \begin{equation}\begin{split} k\pol_k &= \{x\in \bR^n\,|\, (x/k)\cdot\nu_i + \tilde{c}_{i,k} \geq 0, \, \mbox{ for } i=1,\dots, d\}\\
&= \{x\in \bR^n\,|\, x\cdot\nu_i + k\tilde{c}_{i,k} \geq 0, \, \mbox{ for } i=1,\dots, d\}.
\end{split}
\end{equation}
 Therefore, $k\pol_k$ has the same normal inward vectors as $\pol$ and is Delzant {iff} $\pol$ is. We are using the fact that that $k\geq k_0(\pol)$ so that $k\pol_k, \pol_k$ and $\pol$ have the same combinatorial type. Moreover $k\tilde{c}_{i,k}\in\bZ$. Indeed, the compacity of $\pol$ implies that there exists $m/k\in P$ such that $m\in\bZ^n$ and $\Lm_{i,k}= L_i(m/k)$ and thus $$k\tilde{c}_{i,k} = k(c_i-\Lm_{i,k})= k(c_i-L_i(m/k))= k(c_i-((m/k)\cdot\nu_i +c_i)) = -m\cdot\nu_i \in\bZ.$$ Hence, $k\pol_k$ is an integral Delzant polytope. Finally, $m \in ((k\pol_k) \cap \bZ^n)$ if and only if  ${\frac{m}{k}} \in ((\pol_k) \cap (\bZ^n/k))\subset (\pol \cap (\bZ^n/k))$, thus $\sharp((k\pol_k) \cap \bZ^n)\leq \sharp(\pol\cap (\bZ^n/k))$. Now any point in $(\pol \cap (\bZ^n/k))\backslash (\pol_k \cap (\bZ^n/k))$ would contradict the minimality of $\Lm_{i,k}$ for some $i\in\{1,\dots, d\}$. Hence $\sharp((k\pol_k) \cap \bZ^n)= \sharp ((\pol_k) \cap (\bZ^n/k)) =\sharp(\pol\cap (\bZ^n/k))$. \end{proof}

{We are now in a position to prove Theorem \ref{thm_bound_lambda_1}}
\begin{proof} (of Theorem \ref{thm_bound_lambda_1}) At this point we can apply Bourguignon--Li--Yau's theorem \ref{bly} to conclude that
\begin{equation}\label{bound_bly}
{\lambda_1(M,\omega)\leq \frac{2n(N_k+1)\int_\pol\Phi_{u,k}^*\omega_{FS}\wedge \omega^{n-1}}{N_k\int_\pol \omega^n},}
\end{equation} where $\Phi_{u,k}$ is the embedding~\eqref{toricEMBEDDING0} associated to $k\pol_k$. Now it is well known that the symplectic class associated to $k\pol_k$ is $k$ times the one associated to $P_k$. Then, see Remark \ref{rem_INDEPclass}, we have $[\Phi_{u,k}^*\omega_{FS}] = k[\omega_k]$ where $\omega_k$ is the symplectic form determined by $P_k$. Recall that $\pol_k=\{x\in P: L_i\geq {\Lm_{i,k}}, \, i=1, \cdots d \}$ has the same combinatorial type as $P$, by assumption on $k$. In particular, $P$ and $P_k$ share the same normal inward vectors ({hence the same fan}) but the constant parts of the affine-linear functions defining $P_k$ are $c_i-{\Lm_{i,k}}$ for $i=1,..., d$.  

Let $E_i$ be the cohomology class that is Poincar\'e dual to the divisor in $M$ whose image under the moment map is the facet $F_i$. Then
$$
\frac{1}{2\pi \bi}[\partial\bar\partial \log L_i]=E_i  \;\; \;
$$ 
(see \cite{guillMET}, Theorem 6.2 for a statement of this elementary fact). 

It is well know (see \cite{guillMET}) that $\frac{[\omega]}{2\pi}=\sum_{i=1}^d c_iE_i$. Applying this to $P_k$ we get 
\begin{equation} \frac{[\Phi_{u,k}^*\omega_{FS}]}{2 \pi}= k(c_i -{\Lm_{i,k}})E_i=k\left([\omega]-\sum_{i=1}^d{\Lm_{i,k}}E_i\right).
\end{equation}
Replacing in equation (\ref{bound_bly}) we see that
$$
\lambda_1(M,\omega)\leq \frac{2n(N_k+1)}{N_k}\left(k-\frac{k\int_\pol\sum_{i=1}^d {\Lm_{i,k}} E_i\wedge \omega^{n-1}}{\int_\pol \omega^n}\right).
$$ 

Because $E_i$ is Poincar\'e dual to the pre-image of the $i$-th facet, which we denote by $D_i$, we have that 
$$
\int_\pol E_i\wedge \omega^{n-1}=\int_{D_i}\omega^{n-1}=\text{vol}_{n-1}(D_i)>0.
$$
It follows that 
$$
\lambda_1(M,\omega)\leq\frac{2nk(N_k+1)}{N_k}. 
$$
It is clear from the above argument that if $\pol$ is integral one can take $k=1$ and the bound becomes
$$
\lambda_1(M,\omega)\leq \frac{2n(N+1)}{N}.
$$ This proves Theorem~\ref{thm_bound_lambda_1}.
\end{proof}

 \subsection{The equality case in Bourguignon--Li--Yau's bound}
 The goal of this subsection is to study K\"ahler toric metrics which saturate the bound in Theorem \ref{thm_bound_lambda_1}. We give a quick overview of Bourguignon, Li and Yau's proof in~\cite{BLY} as this proof will be important to us.  
 \begin{proof} [Sketch of proof of Theorem \ref{bly}, see \cite{BLY}]
 Recall that the first eigenspace of $(\bcp^N,\omega_{FS})$ has a basis given by the functions $[Z] \mapsto \Psi_{ij}(Z) -\frac{\delta_{ij}}{N+1}$ where for $i,j \in \{0,1,\dots, N\}$, $$\Psi_{ij}(Z)=\frac{Z_i\overline{Z}_j}{\sum_{k=0}^N |Z_k|^2}$$ is one component of the $SU(N+1)$ moment map $\Psi : \bcp^N \hookrightarrow \mathfrak{su}_{N+1}^*$. The main step of the proof in~\cite{BLY}, is to show that, given a full embedding $\Phi:M \hookrightarrow \bcp^N$, there exists a unique $B\in SL(N+1,\bC)$ such that $B^*=B >0$ and \begin{equation}\label{BalancedCONDITION}\frac{1}{\int_M \omega^n}\int_M (\Psi_{ij}\circ B \circ \Phi)(p) \omega^n = \frac{\delta_{ij}}{N+1}.\end{equation} Said differently,  $(B \circ \Phi)^*\omega_{FS}$ is $(\omega^n/n!)$--balanced, (see~\cite{don:numerics}). To simplify the notation we write $\omega$--balanced instead of $(\omega^n/n!)$--balanced. 

Denote $f_{ij}^B =\Psi_{ij}\circ B \circ \Phi - \frac{\delta_{ij}}{N+1} \, \in C^{\infty}(M)$. The Rayleigh principle implies that \begin{equation}\label{INEQij}
 \lambda_1(M,\omega) \int_M |f_{ij}^B|^2 \frac{\omega^n}{n!} \leq \int_M |\nabla^\omega f_{ij}^B|^2 \frac{\omega^n}{n!}
\end{equation} with equality if and only $f_{ij}^B$ is an eigenfunction of $\Lap$ for the eigenvalue $ \lambda_1(M,\omega)$.

 Taking the sum over $i,j=0,\dots, N$ the left hand side of \eqref{INEQij} gives $$\lambda_1(M,\omega) \frac{N}{N+1} \int_M \frac{\omega^n}{n!}$$ thanks to~\eqref{BalancedCONDITION}. Noticing that $g(\nabla f,\nabla \overline{f})\omega^n = n  \mbox{Re}(df\wedge d^c\overline{f} \wedge\omega^{n-1})$, that the form $\sum_{i,j} d f^B_{ij}\wedge d^c\overline{f^B}_{ij}$ is real and that, on $\bcp^N$,  \begin{equation}\label{PSI=FS}
\sum_{i,j} d \Psi_{ij}\wedge d^c\overline{\Psi}_{ij} = 2\omega_{FS},  
 \end{equation} the right hand side of \eqref{INEQij} gives 
\begin{equation*} 
\sum_{i,j}\int_M |\nabla^\omega f_{ij}^B|^2 \frac{\omega^n}{n!} = \sum_{i,j}\int_M d f^B_{ij}\wedge d^c\overline{f^B_{ij}}\wedge \frac{\omega^{n-1}}{(n-1)!} = \frac{2}{(n-1)!}\int_M (B \circ \Phi)^*\omega_{FS}\wedge \omega^{n-1}. 
\end{equation*} Finally, $B^*\omega_{FS}$ and $\omega_{FS}$ are in the same cohomology class on $\bcp^N$, this concludes the proof. 
\end{proof}
\begin{rem}\label{remEGUALBLY=Eigen}
The equality case in~\eqref{BOUNDbly} implies the equality case in each inequality \eqref{INEQij} and then that each function $f_{ij}^B$ is an eigenfunction of $\Lap$ for the first eigenvalue.   
\end{rem}

In the toric context and for the embedding $\Phi_u$ we get the following refinement of Bourguignon--Li--Yau's result on the existence of balanced metrics.
\begin{lemma} \label{diagonalB}Let $(M,\omega, g_u,J_u, T)$ be a toric K\"ahler manifold with integral polytope $\pol \subset \kt^*$ and corresponding embedding $\Phi_u : M\hookrightarrow \bcp^N$. There exists a diagonal matrix $B=\emph{diag}(\b_0,\dots, \b_N)\in GL(N+1,\bR)$ with $\b_i>0$ and $\emph{tr}B=1$ satisfying the condition~\eqref{BalancedCONDITION}.   
 \end{lemma}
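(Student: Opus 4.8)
The goal is to find a positive diagonal matrix $B = \text{diag}(\b_0,\dots,\b_N)$ with $\text{tr}\,B = 1$ so that the balancing condition \eqref{BalancedCONDITION} holds for $\Phi_u$. The key observation is that the toric symmetry forces the off-diagonal balancing equations to hold automatically once $B$ is diagonal, so that only the $N+1$ diagonal equations remain, and these can be solved by a variational/fixed-point argument. I would proceed as follows.

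First, I would record the effect of the torus action. Writing $\Phi_u$ in the form \eqref{toricEMBEDDING0}, the $m$-th homogeneous coordinate is $e^{\w_m\cdot \partial u/\partial x}\,e^{\bi\,\w_m\cdot\theta}$, so under the $T$-action $\theta\mapsto\theta+\tau$ the $m$-th coordinate is multiplied by $e^{\bi\,\w_m\cdot\tau}$. For a diagonal $B$ the composition $\Psi_{mm'}\circ B\circ\Phi_u$ at the point $(x,\theta)$ equals
$$
\frac{\b_m\b_{m'}\,e^{(\w_m+\w_{m'})\cdot\partial u/\partial x}\,e^{\bi(\w_m-\w_{m'})\cdot\theta}}{\sum_{k}\b_k^2\,e^{2\w_k\cdot\partial u/\partial x}}.
$$
Since $\omega^n$ is $T$-invariant and equals $n!\,dx\wedge d\theta$ in action-angle coordinates, integrating over the torus fibre picks out only the terms with $\w_m=\w_{m'}$, i.e.\ $m=m'$; hence $\int_M(\Psi_{mm'}\circ B\circ\Phi_u)\,\omega^n = 0$ whenever $m\neq m'$, and the off-diagonal conditions in \eqref{BalancedCONDITION} are satisfied for \emph{any} diagonal $B$. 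So it remains to solve the diagonal system
$$
\frac{1}{\text{Vol}}\int_{\mathring\pol}\frac{\b_m^2\,e^{2\w_m\cdot\partial u/\partial x}}{\sum_{k}\b_k^2\,e^{2\w_k\cdot\partial u/\partial x}}\,dx \;=\;\frac{1}{N+1},\qquad m=0,\dots,N,
$$
where $\text{Vol}=\int_{\mathring\pol}dx$; note the left-hand sides automatically sum to $1$, so this is $N$ independent equations in the $N$ ratios among the $\b_m^2$.

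Next I would solve this system. The cleanest route, following Donaldson's and Bourguignon--Li--Yau's treatment, is variational: set $t_m=\log\b_m^2$ and consider the convex function
$$
\mathcal{Z}(t_0,\dots,t_N)=\int_{\mathring\pol}\log\Big(\sum_{k}e^{t_k+2\w_k\cdot\partial u/\partial x}\Big)\,dx \;-\;\frac{1}{N+1}\sum_{k}t_k\cdot\text{Vol},
$$
whose critical point equations are exactly the diagonal system above. One checks $\mathcal{Z}$ is convex (its Hessian is, up to positive factors, the covariance matrix of a probability measure), invariant under $t_k\mapsto t_k+c$, and strictly convex transverse to that direction provided the functions $e^{2\w_k\cdot\partial u/\partial x}$ are pairwise non-proportional on $\mathring\pol$ — which holds because the $\w_k$ are distinct lattice points and $\partial u/\partial x$ is a diffeomorphism onto $\kt$. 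Properness of $\mathcal{Z}$ on the transverse slice $\sum t_k=0$ follows from the fact that $\Phi_u$ is full (equivalently, the $\w_k$ affinely span, so no proper coordinate subspace carries all the mass): if some subset $S$ of the $t_k$ goes to $-\infty$, the integrand behaves like $\log\sum_{k\notin S}(\cdots)$ and the linear term forces the value to $+\infty$. Hence $\mathcal{Z}$ attains a minimum, giving the desired $\b_m$; rescaling so that $\text{tr}\,B=\sum\b_m=1$ (which only shifts the $t_m$, preserving the ratios, and is allowed since \eqref{BalancedCONDITION} is invariant under $B\mapsto cB$) finishes the construction. Positivity $\b_m>0$ is automatic from $t_m=\log\b_m^2$ being finite.

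**Main obstacle.** The routine computations (the torus-averaging that kills the off-diagonal terms, the convexity of $\mathcal{Z}$) are easy; the one genuine point requiring care is \emph{properness/coercivity} of $\mathcal{Z}$ on the slice $\{\sum t_k=0\}$, i.e.\ ruling out that the minimizing configuration sends some $\b_m\to 0$. This is exactly where fullness of the embedding $\Phi_u$ enters, and it must be used quantitatively: one needs that for every nonempty proper subset $S\subsetneq\{0,\dots,N\}$ the complementary exponentials $\{e^{2\w_k\cdot\partial u/\partial x}\}_{k\notin S}$ still span a space of homogeneous coordinates whose image is not contained in a hyperplane, so that the measure $dx$ cannot be supported (even asymptotically) on the locus where those coordinates degenerate. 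Since $\partial u/\partial x:\mathring\pol\to\kt$ is a diffeomorphism and the $\w_k$ are distinct, this reduces to the linear-independence of the characters $e^{2\w_k\cdot y}$ on an open subset of $\kt$, which is standard; I would spell this out as the crux of the argument. Alternatively, one can bypass the variational setup entirely and simply invoke the uniqueness-and-existence statement of Bourguignon--Li--Yau quoted in the sketch above, then argue that $T$-equivariance of $\Phi_u$ forces the unique balancing matrix $B$ to commute with the image torus $\phi(T)\subset\bT^{N+1}$ and hence, because the weights $\w_k$ are pairwise distinct, to be diagonal; this is shorter but uses the black-box uniqueness. I would present the direct variational proof as the main line and remark on the equivariance shortcut.
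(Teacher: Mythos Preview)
Your first step, using torus averaging to kill the off-diagonal balancing conditions for any diagonal $B$, is exactly what the paper does. After that, the approaches diverge.

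For the remaining diagonal system the paper does \emph{not} use a variational argument. Instead it packages the diagonal integrals into a continuous map
\[
\psi_u:\overline\Sigma\longrightarrow\overline\Sigma,\qquad \psi_{u,i}(\alpha)=\alpha_i^2\int_M\frac{|Z_i|^2}{\sum_j\alpha_j^2|Z_j|^2}\frac{\omega^n}{n!},
\]
on the closed standard simplex, observes that $\psi_u$ sends $\partial\Sigma$ to $\partial\Sigma$, and then proves surjectivity by a degree/homotopy argument: one interpolates linearly between $\psi_u$ and the analogous map $\psi_o$ built from the Fubini--Study volume on $\bcp^N$, for which $\psi_o$ is a bijection. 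The homotopy preserves $\partial\Sigma$, so the boundary degree is nonzero and the barycenter is hit. Fullness enters only to guarantee that $\sum_j\alpha_j^2|Z_j|^2$ does not vanish identically for $\alpha\in\overline\Sigma$, so that dominated convergence gives continuity up to the boundary.

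Your convexity/properness route is correct and arguably more conceptual; the functional $\mathcal Z$ is standard in the balanced-metrics literature, and your identification of coercivity on $\{\sum t_k=0\}$ as the crux is right. One small correction: the role of fullness is lighter than you state. You do not need any statement about complementary families of exponentials spanning; it suffices that each single coordinate function $Z_k$ is not identically zero on $M$ (equivalently $p_k=|Z_k|^2/\sum|Z_j|^2$ is positive on an open set), which already follows from fullness. This is enough to make $\int_P\log p_{k^*}\,dx>-\infty$ and hence $\mathcal Z(sv)\to+\infty$ along any direction $v$ with $\sum v_k=0$, since the linear term vanishes and the log-sum-exp is bounded below by $s\max_k v_k+\log p_{k^*}$.

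Your alternative via the Bourguignon--Li--Yau uniqueness is also valid and is the shortest path: conjugating the unique Hermitian positive balancing matrix $B$ by $\phi(t)$ yields another balancing matrix, uniqueness forces $B$ to commute with $\phi(T)$, and distinctness of the weights $\w_k$ then makes $B$ diagonal. The paper does not take this route, presumably to keep the argument self-contained rather than invoking the uniqueness part of \cite{BLY} as a black box.
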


 \begin{proof}
  First, observe that when $i\neq j$, the function $\Psi_{ij}\circ \Phi_u$ integrates to $0$ on $M$ since it does on each orbit of $T$. Hence to prove the lemma, we only have to prove that there exists $\b=(\b_0,\dots, \b_N)\in \bR^{N+1}_{>0}$ such that, for $i=0,\dots, N$, $$\psi_{u,i}(\b) := \b_i^2\int_M \frac{|Z_i|^2}{\sum_{j=0}^N\b_j^2|Z_j|^2}\frac{\omega^n}{n!} = \frac{1}{N+1}$$ where $Z_i = \Phi_{u,m_i}(x,\theta)$ see~\eqref{toricEMBEDDING0}. Let $\Sigma$ be the simplex defined by
$$
  \Sigma :=\left\{ X\in \bR^{N+1}\,|\,\sum_{i=0}^N X_i = 1,\, X_i >0 \right\}.
$$
Because $M$ is full in $\bcp^N$, $\sum_{j=0}^N\b_j^2|Z_j|^2$ does not vanish identically on $M$ (otherwise the $M$ would be contained in a positive codimension subvariety). Since $\sum_{i=1}^N\psi_{u,i}(\alpha) = \mbox{vol}(M)<+\infty$ and each component $\psi_{u,i}(\alpha)\geq 0$ for each $\alpha \in \bR^{N+1}$, by the dominated convergence lemma, the maps $\psi_{u,i}$ can be extended continuously to $\overline{\Sigma}$. Hence, we see $\psi_{u}= (\psi_{u,0}, \dots , \psi_{u,N})$ as a continuous map $$\psi_u : \overline{\Sigma} \longrightarrow \overline{\Sigma}$$ from the closed simplex $\overline{\Sigma}$ to itself. It is obvious that $\psi_{u}$ maps $\del \Sigma$ to $\del \Sigma$. 
  
 To prove the lemma we need to prove that $\psi_u$ is surjective which will follow if we prove that the restriction $\psi_{u}: \del \Sigma \ra \del \Sigma$ has non-trivial degree. 
  
Like in \cite{BLY}, instead of integrating on $M$, we integrate on $\bcp^N$ with the measure $d\mu_u$ defined to be the push forward of the measure on $M$ defined by the metric. This is possible because $\Phi_u$ is a full embedding. Hence $$\psi_{u,i}(\b) = \b_i^2\int_{\bcp^N} \frac{|Z_i|^2}{\sum_{j=0}^N\b_j^2|Z_j|^2}d\mu_u.$$ Now if we consider the volume form $d\mu_o$ induced by the Fubini-Study metric on $\bcp^N$, the corresponding map $\psi_{o}: \del \Sigma \ra \del \Sigma$ with components $$\psi_{o,i}(\b) = \b_i^2\int_{\bcp^N} \frac{|Z_i|^2}{\sum_{j=0}^N\b_j^2|Z_j|^2}d\mu_o$$ and its restriction $\psi_{o}: \del \Sigma \ra \del \Sigma$ are bijections.  
Now $\psi_t = t\psi_u +(1-t)\psi_o$ is a family of continuous maps from $\overline{\Sigma}$ to itself preserving the boundary. The degree of the $\psi_t$ does not depend on $t$ and is non trivial for $t=0$. 
 \end{proof}
 
  \begin{rem} A straightforward corollary of this lemma is that the $\omega$--balanced metric is toric as soon as $\omega$ is toric. \end{rem}

Theorem~\ref{saturate} is a consequence of two propositions we state below and which we prove using the following observation. Assume that the first eigenvalue of $(M,\omega,g_u, J_u, T)$ reaches the Bourguignon, Li and Yau's bound, i.e $\lambda_1(g_u)= \frac{2n(N+1)}{N}$. By Remark~\ref{remEGUALBLY=Eigen} and Lemma~\ref{diagonalB}, there exists a set of $N+1$ real positive numbers $\{\b_k\}_{k\in \pol\cap \bZ^n}$ such that for each $m,k \in \pol\cap \bZ^n$, the function $\Psi_{mk} - \delta_{mk}/(N+1)$ is an eigenfunction of eigenvalue $\frac{2n(N+1)}{N}$ where $$\Psi_{mk}= \frac{\b_k\b_mZ_m\overline{Z}_k}{\sum_{j} |\b_j Z_j|^2}$$ is seen as a function of $(x,\theta)$. Here we write $$Z_m=e^{(u_x +{\bi} \theta)\cdot m},$$ where $u_x=\frac{\del{u}}{\del x}$. We assume, without loss of generality, that $0\in \pol\cap \bZ^n $.  Hence the image of $\Phi_u$, see~\eqref{toricEMBEDDING0}, lies in the set where $Z_0\ne 0$ and we work on this set. We normalize the $\b$'s so as to have $\b_0=1$ instead of $\sum_
{k\in \pol\cap \bZ^n} \b_k =1$ as in the previous lemma. We 
recall that 
\begin{equation}\label{intCONSTRAINT} \int_M \Psi_{mk} \frac{\omega^n}{n!} = \delta_{mk}\int_M \frac{\omega^n}{n!}.\end{equation}

 Observe that for each pair $m,k \in \pol\cap \bZ^n$ 
\begin{equation}\label{RosaTrick1}
 \Lap \Psi_{mk} = \Lap(\b_mZ_m \Psi_{0k}) = \b_mZ_m \Lap\Psi_{0k} - 2 \b_m \langle dZ_m , d\Psi_{0k} \rangle  
\end{equation} since $ \Lap(Z_m)=0$  where $\langle\cdot,\cdot\rangle$ denotes the inner product induced by $g_u$ on the cotangent bundle of $M$. 

When $k=0\neq m$, identity \eqref{RosaTrick1} becomes
\begin{equation*}\begin{split}
\frac{2n(N+1)}{N} \Psi_{m0} &= \frac{2n(N+1)}{N}\b_mZ_m (\Psi_{00} -\frac{1}{N+1}) - 2 \b_m \langle dZ_m , d\Psi_{00} \rangle\\
&= \frac{2n(N+1)}{N}\Psi_{m0} -\frac{2n\b_mZ_m}{N} - 2 \b_m \langle dZ_m , d\Psi_{00} \rangle,
\end{split} 
\end{equation*} and we get 
 \begin{equation*}\frac{2n\b_mZ_m}{N}  = - 2 \b_m \langle dZ_m , d\Psi_{00} \rangle.
\end{equation*} Developing the right hand side in action angle coordinates, using~\eqref{ActionAnglemetric}, we have  
\begin{equation*}\begin{split}\frac{2n\b_mZ_m}{N}  &= - 2 \b_m \langle dZ_m , d\Psi_{00} \rangle\\
&= - 2\b_m\sum_{i,j=1}^n H_{ij}\del_{x_i}Z_m\del_{x_j}\Psi_{00}\\
&= \frac{4\b_m Z_m \sum_{k\in W}\sum_{s,t=1}^n u_{ts}m_sk_t |\b_kZ_k|^2}{\left(\sum_{k\in W} |\b_kZ_k|^2\right)^2}
\end{split} 
\end{equation*} where $W=\pol\cap \bZ^n$. Dividing both sides by $2\b_m Z_m$, we end up with 

\begin{equation}\label{RosaTrick2} \frac{n}{N} = \frac{2 \sum_{k\in W}\sum_{s,t=1}^n u_{ts}m_sk_t |\b_kZ_k|^2}{\left(\sum_{k\in W} |\b_kZ_k|^2\right)^2}= -(d\Psi_{00})(m) 
\end{equation}

\begin{proposition}\label{CPnsaturates}
Let $(\omega, g_u,J_u, T)$ be a toric K\"ahler structure on $\bC\bP^n$. Assume that $\lambda_1=2(n+1)$ i.e. assume that the first eigenvalue of the Laplacian reaches the Bourguignon--Li--Yau bound.  Then, the toric K\"ahler metric $g_u$ is the Fubini-Study metric on $\bC\bP^n$.
\end{proposition}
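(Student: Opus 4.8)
The plan is to use the characterization~\eqref{RosaTrick2} as the main engine. Assume $(\omega,g_u,J_u,T)$ is a toric K\"ahler structure on $\bC\bP^n$ saturating the bound. For $\bC\bP^n$ the polytope $\pol$ is (equivariantly symplectomorphic to) the standard simplex, so $W=\pol\cap\bZ^n$ consists of exactly $N+1=n+1$ lattice points, which we may take to be $0=m_0$ and the standard basis vectors $e_1,\dots,e_n$. Thus $N=n$ and the predicted Einstein-type constant is $n/N=1$. Write $Z_j=e^{(u_x+\bi\theta)\cdot m_j}$, so $|\b_jZ_j|^2=\b_j^2 e^{2u_x\cdot m_j}$ with $\b_0=1$. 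The first step is to feed $m=e_p$ into~\eqref{RosaTrick2}: since $m_s(e_p)_t=\delta_{sp}\delta_{tp}$ only survives when $k$ has a nonzero $p$-component, i.e.\ $k=e_p$ among the vertices, the double sum collapses and we get, for each $p=1,\dots,n$,
\begin{equation}\label{eq:plan-one}
\frac{1}{N}\Bigl(1+\sum_{j=1}^n\b_j^2e^{2u_x\cdot e_j}\Bigr)^2 = 2\,u_{pp}\,\b_p^2 e^{2u_x\cdot e_p},
\end{equation}
and more generally the mixed terms must reorganize so that in fact $-(d\Psi_{00})(m)$ is \emph{linear} in $m\in W$ with slope $1/N$ on each coordinate. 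The key observation is that $-(d\Psi_{00})$ being an affine function of the integer points combined with $\Psi_{00}=1/(1+\sum\b_j^2 e^{2u_x\cdot e_j})$ forces a differential equation on the moment map $x\mapsto u_x$.

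Next I would translate~\eqref{eq:plan-one} and its analogues into an ODE/PDE for $u_x=\partial u/\partial x$, or better, work on the complex side. Observe that $\Psi_{00}\circ\Phi_u$ is, by~\eqref{RosaTrick2}, an affine function of the coordinates $y=u_x$ pulled back via the Legendre-type map --- more precisely $d\Psi_{00}$ is constant in the $\theta$-directions (it depends only on $x$) and its differential against each integer direction $m_j-m_0$ equals $1/N$. Since the $m_j-m_0=e_j$ span $\bR^n$, this says $d\Psi_{00}=\frac{1}{N}\sum_j dx_j$ as a one-form on $\mathring\pol$ pulled back appropriately; hence $\Psi_{00}=\frac{1}{N}\sum_j x_j + \text{const}$ on $M$. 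But $\Psi_{00}=|\b_0Z_0|^2/\sum_j|\b_jZ_j|^2 = 1/(1+\sum_{j\ge1}\b_j^2 e^{2(u_x)_j})$ is exactly (up to the affine normalization fixing $\int_M x=0$) the statement that $\sum_{j\ge1}\b_j^2 e^{2(u_x)_j} = \frac{N-\sum x_j}{\sum x_j}$ (with the simplex normalized so $\sum x_j\in(0,1)$). Differentiating and comparing with the definition of the symplectic potential then pins down $u_x$, and hence $\Hess u = (\text{symplectic potential of the Fubini--Study metric})$, up to the ambiguity of an affine linear function --- which is precisely the equivalence relation on symplectic potentials. One then recognizes the resulting metric as $g_{u_o^{FS}}$, the Guillemin/Fubini--Study potential on the simplex.

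The main obstacle, I expect, is the step of extracting a clean equation for $u_x$ from~\eqref{RosaTrick2}: a priori~\eqref{RosaTrick2} only gives the value of $-(d\Psi_{00})$ at the finitely many integer points $m\in W$, not at all of $\bR^n$, so one must argue that this is enough. The resolution is that for $\bC\bP^n$ the differences $m_j-m_0$ already form a basis of $\bR^n$, so knowing $d\Psi_{00}$ on this basis determines the (constant-in-$\theta$) one-form $d\Psi_{00}$ everywhere; this is special to the simplex and is where ``$M=\bC\bP^n$'' is used crucially (the companion Proposition handling general $M$ must work harder). A secondary technical point is bookkeeping the normalizations: the $\b_j$'s, the additive constant in the symplectic potential, and the translation fixing $\int_M x=0$ all have to be tracked so that at the end one genuinely lands on the Fubini--Study potential and not merely something in its affine-equivalence class; but since toric K\"ahler metrics are parametrized by symplectic potentials up to affine functions, this ambiguity is exactly the right one and causes no loss. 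Finally one invokes that a symplectic potential satisfying the resulting (Monge--Amp\`ere-type, but here explicitly solvable) equation on the simplex is unique up to affine terms, identifying $g_u$ with the Fubini--Study metric.
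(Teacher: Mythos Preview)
Your overall idea --- use the equality case of Bourguignon--Li--Yau via \eqref{RosaTrick2} to compute $\Psi_{00}$ explicitly as an affine function of $x$ --- matches the opening move of the paper's proof. However, two issues arise.

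First, a computational slip: in your display \eqref{eq:plan-one} the sum over $k\in W$ does \emph{not} collapse to the single term $k=e_p$. With $m=e_p$ one has $\sum_{s,t} u_{ts} m_s k_t = \sum_t u_{tp} k_t$, which for $k=e_j$ equals $u_{jp}$ for every $j$, not just $j=p$. So the right-hand side should be $2\sum_{j=1}^n u_{pj}\,\b_j^2 e^{2u_x\cdot e_j}$, and your subsequent remarks about ``the mixed terms must reorganize'' are in fact where the content lies. What \eqref{RosaTrick2} does give cleanly is $-\partial\Psi_{00}/\partial x_p = 1$ for each $p$, hence (with the integral constraint) $\Psi_{00}=1-\sum_j x_j$; the paper records exactly this.

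Second, and more seriously, you then propose to recover $u$ from the single scalar identity $\Psi_{00}=1/(1+\sum_j\b_j^2 e^{2(u_x)_j})=1-\sum_j x_j$. For $n\geq 2$ this is one first-order PDE for the single unknown $u$, but it is of Hamilton--Jacobi type and by itself does not determine $u$ up to an affine function: the relation $\sum_j \b_j^2 e^{2(u_x)_j} = (\sum_j x_j)/(1-\sum_j x_j)$ leaves $n-1$ degrees of freedom among the $(u_x)_j$ at each point, constrained only by the closedness of $du_x$ and the boundary behaviour in $\mS(\pol,\nu)$. You assert that ``differentiating \dots\ then pins down $u_x$'', but differentiating a scalar identity only reproduces its gradient and adds no new independent equations. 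So as written there is a genuine gap.

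The paper closes this gap differently and more economically: instead of trying to solve for $u$ from $\Psi_{00}$ alone, it goes back to \eqref{RosaTrick1} with $m=k\neq 0$ and with $0\neq m\neq k\neq 0$ to obtain $\partial\Psi_{mm}/\partial x_m=1$ and $\partial\Psi_{mm}/\partial x_k=0$ for $k\neq m$, hence $\Psi_{mm}=x_m$ for every $m$. Since the $\Psi_{mm}$ are eigenfunctions by Remark~\ref{remEGUALBLY=Eigen}, this says the moment map components themselves are $\lambda_1$--eigenfunctions; Proposition~\ref{propKEeigenfunction} then forces $g_u$ to be K\"ahler--Einstein, and uniqueness of the extremal toric metric on $\bC\bP^n$ (Guan) identifies it with Fubini--Study. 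In short, the missing ingredient in your plan is to exploit the \emph{remaining} eigenfunction identities (for $\Psi_{mk}$ with $m,k\neq 0$), which supply exactly the extra $n$ equations needed; once you have $\Psi_{mm}=x_m$ you can finish either by your direct computation of the potential or, more cleanly, via Proposition~\ref{propKEeigenfunction}.
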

\begin{proof}
The moment polytope $P$ of $\bC\bP^n$ is a simplex and if $(\omega, g_u,J_u, T)$ saturates the bound then $\pol$ is primitive as explained in Remark~\ref{rem:optimalCALSS}. So one can suitably normalize it so that it has integer vertices $0, e_1, \cdots e_n$ where $e_i$ is the vector in $\bR^n$ whose $i$-th component is $1$ and all others are zero. In the notation above, $N=n$ and we identify $W=\pol\cap \bZ^n$ with $\{0,1,\dots,n\}$. Equation \eqref{RosaTrick2} implies that, for all $m=1,\dots, n$, 
\begin{equation}\label{eq1} 1 = \frac{2\sum_{m,k=1}^n u_{mk}|\b_kZ_k|^2}{\left(1 + \sum_{k=1}^n |\b_kZ_k|^2\right)^2}= -\frac{\del \Psi_{00}}{\del x_m} 
\end{equation}
where $Z_k=e^{(u_k +{\bi} \theta_k)}$ for $k=1,\dots, n$. Hence  $$
\Psi_{00}= K-\sum_{k=1}^nx_k
$$
for some constant $K$. The additive constant is fixed to be $K=1$ by the integration constraint~\eqref{intCONSTRAINT}.

 In the case $m= k\neq0$, identity \eqref{RosaTrick1} gives $$2(n+1) (\Psi_{mm} -1/n+1)= \b_mZ_m2(n+1)\Psi_{0m} - 2 \b_m \langle dZ_m , d\Psi_{0m} \rangle $$ that is 
\begin{equation*}
-2= -2\b_m \langle dZ_m , d\Psi_{0m} \rangle.    
\end{equation*} Developing the right hand side using~\eqref{eq1}, we get $$-2=-2\b_m \langle dZ_m , d\Psi_{0m} \rangle = 2 |\b_mZ_m|^2\left(\frac{2u_{mm}}{\sum_{i} |\b_i Z_i|^2} -1\right)= - 2\frac{\del \Psi_{mm}}{\del x_m} .$$ Therefore, for each $m>0$ 
\begin{equation}\label{eq_PSIm1}
  \frac{\del \Psi_{mm}}{\del x_m}  =1
\end{equation}
 
 In the case $0\neq m\neq k \neq 0$, identity \eqref{RosaTrick1} gives $2(n+1) \Psi_{mk}= \b_mZ_m2(n+1)\Psi_{0k} - 2 \b_m \langle dZ_m , d\Psi_{0k} \rangle,$ that is 
\begin{equation*}
0= -2\b_m \langle dZ_m , d\Psi_{0k} \rangle.    
\end{equation*}  Developing the right hand side using~\eqref{eq1}, we get 
\begin{equation*}\begin{split}
0 &=-2\b_m \langle dZ_m , d\Psi_{0k} \rangle = 2 \b_m\b_kZ_m\overline{Z}_k\left(\frac{2u_{mk}}{\sum_{i} |\b_i Z_i|^2} -1\right)\\
&= - \frac{2\b_m\b_kZ_m\overline{Z}_k}{|\b_mZ_m|^2}\left( \frac{\del\Psi_{mm}}{\del x_k} \right).
\end{split}\end{equation*} Therefore, for each $m,k>0$ and $k\neq m$, we have  
\begin{equation*}
  \frac{\del \Psi_{mm}}{\del x_k} = 0.
\end{equation*} Together with \eqref{eq_PSIm1} it gives \begin{equation}\label{psi=eigen}\Psi_{mm} = x_m\end{equation} where again the additive constant is fixed by the integration constraint~\eqref{intCONSTRAINT} on $\Psi_{mm}$. We conclude from Remark~\eqref{remEGUALBLY=Eigen} that the components of the moment map are eigenfunction for the same eigenvalue and thus, by Proposition~\ref{propo:moment_map_eigenfunction} $(g_u, J_u)$ is K\"ahler-Einstein. By uniqueness of extremal toric metric~\cite{guan}, $(g_u, J_u)$ is the Fubini-Study metric. \end{proof}

We can also prove that if a toric manifold admits a toric K\"ahler metric for which the embedding given by the integral points of the moment polytope saturates the Bourguignon--Li--Yau bound, then that manifold must be $\bC\bP^n$ and it follows from the above proposition that the metric must be the Fubini-Study metric. 
\begin{proposition}
 Let $(M,\omega,g_u,J_u)$ be a toric K\"ahler manifold with integral polytope $P$. Assume that $\lambda_1(g_u)=\frac{2n(N+1)}{N}$ with $N=\sharp (P\cap \bZ^n)-1$. Then $P$ is the standard simplex and $M$ is (equivariantly symplectomorphic to) $\bC\bP^n$.
 \end{proposition}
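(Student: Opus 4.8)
The plan is to run the argument of Proposition~\ref{CPnsaturates} in reverse: instead of assuming $\pol$ is a simplex, I would let the equality case in Bourguignon--Li--Yau's bound force it to be one. First I would recall, exactly as in the discussion preceding Proposition~\ref{CPnsaturates}, that the hypothesis $\lambda_1(g_u)=\frac{2n(N+1)}{N}$ together with Remark~\ref{remEGUALBLY=Eigen} and Lemma~\ref{diagonalB} produces a diagonal balanced matrix $B=\text{diag}(\b_k)_{k\in W}$, with $W=\pol\cap\bZ^n$, making each $\Psi_{mk}-\delta_{mk}/(N+1)$ an eigenfunction for $\lambda_1$. Specializing identity~\eqref{RosaTrick1} to $k=0\neq m$ then gives~\eqref{RosaTrick2}, i.e.\ the directional derivative of the ($T$-invariant) function $\Psi_{00}$ descended to $\pol$ satisfies $(d\Psi_{00})_x(m)=-\tfrac{n}{N}$ for every $m\in W\setminus\{0\}$ and every $x\in\mathring{\pol}$. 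This single identity is the whole engine of the proof.

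Since the conclusion is only claimed up to equivariant symplectomorphism, the next step is to normalize $\pol$: translate by a lattice vector so that $0$ becomes a vertex, then act by an element of $GL(n,\bZ)$ so that the $n$ inward normals to the facets through $0$ are $e_1,\dots,e_n$ --- this is possible precisely because $\pol$ is Delzant. After this normalization the $n$ facets through $0$ are the hyperplanes $\{x_i=0\}$, so $\pol\subset\bR^n_{\geq 0}$, and the $n$ edges of $\pol$ at the vertex $0$ run along the coordinate axes; each of them ends at a vertex $\lambda_i e_i$ with $\lambda_i\in\bZ_{>0}$, so by convexity $e_1,\dots,e_n\in W$. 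Now $\{e_1,\dots,e_n\}$ is a basis of $\bR^n$ on which $d\Psi_{00}$ takes the constant value $-n/N$, hence $d\Psi_{00}$ is $x$-independent and $\Psi_{00}(x)=c-\tfrac{n}{N}\sum_{i=1}^n x_i$. Plugging an arbitrary $m\in W\setminus\{0\}$ into the identity now gives $\sum_{i=1}^n m_i=1$, and since $m\in\bZ^n$ with $m_i\geq 0$ this forces $m=e_j$ for some $j$. Thus $W=\{0,e_1,\dots,e_n\}$, so $N=n$; and since the vertices of the integral Delzant polytope $\pol$ are lattice points while an $n$-dimensional polytope has at least $n+1$ vertices, every element of $W$ must be a vertex and $\pol=\text{conv}(0,e_1,\dots,e_n)$ is the standard simplex. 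By the Delzant--Lerman--Tolman correspondence $M$ is then equivariantly symplectomorphic to $\bC\bP^n$ endowed with a Fubini--Study symplectic form, and Proposition~\ref{CPnsaturates} finishes the job by identifying $g_u$ with the Fubini--Study metric.

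The step I expect to be the crux is the second one --- converting the a priori weak geometric information ``all lattice points of $\pol$ other than the origin lie on the affine hyperplane $\sum x_i=1$'' into the rigid statement ``$\pol$ is the standard simplex''. The key is to exploit the Delzant structure at the chosen vertex: once $\pol$ is placed in the positive orthant with the standard edge cone at $0$, the points $e_1,\dots,e_n$ are automatically lattice points of $\pol$, they span $\bR^n$, and the hyperplane condition then leaves literally no room for any additional lattice point. Everything else --- the eigenfunction input, the computation of $d\Psi_{00}$, and the concluding count of lattice points versus vertices --- is routine and essentially quoted from, or modeled on, the proof of Proposition~\ref{CPnsaturates}.
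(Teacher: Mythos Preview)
Your argument is correct and follows the same route as the paper: normalize so that $0$ is a vertex with the standard edge cone, then use identity~\eqref{RosaTrick2} to force every non-zero lattice point of $\pol$ onto a single affine hyperplane, which collapses $\pol$ to a simplex. The only difference is in the finishing move --- the paper feeds the adjacent \emph{vertices} $m_i=\lambda_i e_i$ into the linearity and rules out any further vertex by a convexity/extremality argument, whereas you feed in the lattice points $e_i$ themselves and read off $\sum_j m_j=1$ for every $m\in W\setminus\{0\}$, pinning down $W=\{0,e_1,\dots,e_n\}$ and hence $P=\Delta_n$ in one stroke; your variant is marginally cleaner since it yields $\lambda_i=1$ without a separate step.
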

\begin{proof} Again we assume that the origin lies in $\pol\cap \bZ^n$, more precisely, up to an integral invertible affine transformation, we may assume that $P$ is standard at the origin i.e. the facets that meet at $0$ have normals $e_1,\cdots, e_n$. In particular, the vertices of $P$ adjacent to the origin, say $m_1,\cdots m_n$, are each an integral multiple of an element of a dual basis of $e_1,\cdots, e_n$ respectively. 

Under the hypothesis of the proposition and with respect to the notation above, Equations \eqref{RosaTrick1} and \eqref{RosaTrick2} hold for points in $\pol\cap \bZ^n$. Suppose there is $m\in P\cap\bZ^n$ a vertex distinct from the origin and from $m_1,\cdots m_n$. Then, there exist $a_1,\cdots a_n$ such that $m=\sum_{l=1}^na_lm_l$.  

Equation \eqref{RosaTrick2} holds for $m$ as well as for $m_1, \cdots m_n$. So we must have 
$$\frac{n}{N}=-(d\Psi_{00})(m) = - \sum_{l=1}^na_l (d\Psi_{00})(m_l) = (\sum_{l=1}^na_l){\frac{n}{N}}$$
which implies $\sum_{l=1}^na_l=1$. So $m$ lies on a facet of the simplex of vertices $0,m_1,\cdots m_n$ which contradicts convexity unless $P$ is that simplex. 
\end{proof}
To sum up we proved Theorem \ref{saturate}.
\bibliographystyle{abbrv}

\end{document}